\newtheoremstyle{mystyle2}{}{}{}{2pt}{\scshape}{.}{ }{}
\newtheoremstyle{mystyle}{}{}{\slshape}{2pt}{\scshape}{.}{ }{}
\newtheoremstyle{etapestyle}{}{}{\itshape}{2em}{\sffamily}{:}{ }{\thmname{#1}}
\newtheoremstyle{definitionstyle}{}{}{}{2pt}{\bfseries}{.}{ }{}
\newtheorem{thm}{Th\'{e}or\`{e}me}[section]
\newtheorem{cor}[thm]{Corollaire}
\newtheorem{prop}[thm]{Proposition}
\newtheorem{lemme}[thm]{Lemme}
\newtheorem*{prop*}{Proposition}
\theoremstyle{mystyle2}
\newtheorem{ex}[thm]{Exemple}
\theoremstyle{mystyle}
\theoremstyle{remark}
\newtheorem{rem}[thm]{Remarque}
\theoremstyle{etapestyle}
\theoremstyle{definitionstyle}
\DeclareMathOperator{\codim}{codim} 
 \DeclareMathOperator{\Spec}{Spec}
 \DeclareMathOperator{\lon}{long}
\DeclareMathOperator{\Pic}{Pic}
\DeclareMathOperator{\Card}{Card}
\DeclareMathOperator{\Sym}{Sym}
\DeclareMathOperator{\Id}{Id}
\DeclareMathOperator{\rad}{rad}
 \DeclareMathOperator{\Sing}{Sing}
\begin{document}

\renewcommand{\bibname}{Bibliographie}

\title{Degr\'es d'homog\'en\'eit\'e de l'ensemble des intersections compl\`etes singuli\`eres}
\author{Olivier BENOIST}  
\date{}
\maketitle


\tableofcontents

\newpage

\section{Introduction}

On travaille sur un corps de base $K$, qui sera souvent sous-entendu. Par exemple, $\mathbb{P}^N=\mathbb{P}^N_K$.

Une formule classique de Boole montre que, si $K$ est de caract\'eristique $0$, l'ensemble des 
hypersurfaces singuli\`eres de degr\'e $d$ dans $\mathbb{P}^N$ est un diviseur de degr\'e $(N+1)(d-1)^N$ 
dans l'espace projectif de toutes les hypersurfaces. 
On obtient ici des formules analogues pour des intersections compl\`etes 
de codimension et de degr\'es quelconques dans $\mathbb{P}^N$, en toute caract\'eristique.

\subsection{\'Enonc\'e du th\'eor\`eme principal}\label{situation}

On fixe $1\leq c\leq N+1$ et $1\leq d_1,\ldots,d_c$ des entiers. On notera $e_i=d_i-1$. On va s'int\'eresser aux intersections compl\`etes de codimension $c$ dans 
$\mathbb{P}^N$, solutions d'\'equations homog\`enes de degr\'es $d_1,\ldots,d_c$ : on notera $n=N-c$ leur dimension.

Pour cela, on consid\`ere $V=\bigoplus_{1\leq i\leq c}H^0(\mathbb{P}^N,\mathcal{O}(d_i))$. 
Les \'el\'ements de $V$ sont la donn\'ee de $c$ polyn\^omes homog\`enes de degr\'es $d_1,\ldots,d_c$ en $N+1$ variables $X_0,\ldots,X_N$. 
Soit $D$ le ferm\'e de $V$ constitu\'e des $(F_1,\ldots,F_c)$ tels que $\{F_1=\ldots=F_c=0\}$ ne soit pas lisse de codimension $c$ dans $\mathbb{P}^N$. 
On le munit de sa structure r\'eduite.
Par le lemme \ref{irred}, la vari\'et\'e $D$ est irr\'eductible. Notons $\Delta$ une de ses \'equations (par convention, $\Delta=1$ si $\codim_V(D)>1$).
On appellera $D$ le lieu discriminant et $\Delta$ le discriminant.

\begin{rem}\label{remdef}
 Les lemmes \ref{nondef} et \ref{def}, au vu du corollaire \ref{cridef}, montreront que $\codim_V(D)>1$ exactement quand $d_1=\ldots=d_c=1$ et $c<N+1$.
\end{rem}

Le discriminant est visiblement homog\`ene en les coefficients de chacune des \'equations $F_i$. Le but de ce texte est 
de calculer ces degr\'es d'homog\'en\'eit\'e partiels.
Soyons plus pr\'ecis.

Plusieurs transformations de $V$ laissent $D$ invariant. C'est le cas des actions $\rho$ et $\rho_i$ de $\mathbb{G}_m$ et $\rho'$ de $GL_{N+1}$ d\'ecrites ci-dessous :
\begin{alignat}{3}
   \rho(\lambda):(F_1,\ldots,F_c)&\mapsto(\lambda F_1,\ldots,\lambda F_c) \nonumber\\
   \rho_i(\lambda):(F_1,\ldots,F_c)&\mapsto(F_1,\ldots,F_{i-1},
\lambda F_{i},F_{i+1},\ldots,F_c) \label{action}\\
   \rho'(M):(F_1,\ldots,F_c)&\mapsto(F_1\circ M^{-1},\ldots,F_c\circ M^{-1}).\nonumber
\end{alignat}
Les actions duales induites sur $\Sym^{\bullet}V^*$ pr\'eservent la droite $\left\langle \Delta\right\rangle$. Ces actions sur $\left\langle \Delta\right\rangle$ 
se font via un caract\`ere du groupe. Il existe donc des entiers $\deg$, $\deg_i$ et $\deg_{var}$ tels que 
\begin{alignat}{3}
   \rho(\lambda).\Delta&=\lambda^{-\deg}\Delta \nonumber\\
   \rho_i(\lambda).\Delta&=\lambda^{-\deg_i}\Delta \label{defdeg}\\
   \rho'(M).\Delta&=\det(M)^{\deg_{var}}\Delta.\nonumber
\end{alignat}
Par exemple, $\deg$ est le degr\'e total du polyn\^ome homog\`ene $\Delta$. Les autres nombres s'interpr\`etent comme des degr\'es d'homog\'en\'eit\'e partiels. 
Les identit\'es  
$\rho(\lambda)=\rho_1(\lambda)\circ\ldots\circ\rho_c(\lambda)$ et $\rho'(\lambda^{-1}\Id)=\rho_1(\lambda)^{\circ d_1}\circ\ldots\circ\rho_c(\lambda)^{\circ d_c}$
montrent qu'ils sont li\'es par les relations :
\begin{alignat}{2}
   \deg &=\sum_{i=1}^{c} \deg_i \label{deg}\\
   (N+1)\deg_{var}&=\sum_{i=1}^{c} d_i\deg_i.\label{degvar}
\end{alignat}

\begin{rem}
On peut aussi d\'ecrire les $\deg_i$ comme suit. Consid\'erons comme espace de param\`etres 
$E=\prod_{1\leq i\leq c}\mathbb{P}H^0(\mathbb{P}^N,\mathcal{O}(d_i))$ et notons encore $D$ le ferm\'e irr\'eductible de $E$
constitu\'e des $([F_1],\ldots,[F_c])$ tels que $\{F_1=\ldots=F_c=0\}$ ne soit pas lisse de codimension $c$ dans $\mathbb{P}^N$, muni de sa structure r\'eduite.
Alors, quand $D$ est un diviseur de $E$, sa classe dans $\Pic(E)\simeq\mathbb{Z}^c$ est $\mathcal{O}(\deg_1,\ldots,\deg_c)$.
\end{rem}

Le r\'esultat principal de ce texte est le suivant :

\begin{thm}\label{princ} On a les \'egalit\'es suivantes :
\begin{alignat}{2}
   \deg_i =\frac{1}{\mu}d_1\ldots\hat{d}_i\ldots d_c&\sum_{l=1}^{c}\frac{1}{\prod_
{l'\neq l}(e_l-e_{l'})}\left(\frac{e_i^{N+1}-e_l^{N+1}}{e_i-e_l}\right)\label{degi} \\
   \deg_{var}=\frac{1}{\mu}d_1\ldots d_c&\sum_{l=1}^{c}\frac{e_l^N}{\prod_{l'\neq l}(e_l-e_{l'})}\label{var}
\end{alignat}
o\`u $\mu=1$ si $K$ n'est pas de caract\'eristique $2$ ou si $n$ est impair, et $\mu=2$ si $K$ est de caract\'eristique $2$ et $n$ est pair.
\end{thm}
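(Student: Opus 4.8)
The plan is to compute these degrees by intersection-theoretic means on a suitable incidence variety, exploiting the fact that $D$ is a discriminant and hence its class is a pushforward of a natural cohomology class. Concretely, I would introduce the incidence variety
$$I = \{(x, F_1,\ldots,F_c) \in \mathbb{P}^N \times V \;:\; x \in \{F_1 = \cdots = F_c = 0\} \text{ and this intersection is singular at } x\}.$$
Over the open locus of $V$ where the complete intersection is smooth of codimension $c$ except possibly at one point, $I \to D$ is birational (generically one-to-one), so $[D]$ is the pushforward of $[I]$. The singularity condition at $x$ is, in appropriate local coordinates, the vanishing of the $F_i$ together with the failure of the Jacobian matrix $(\partial F_i/\partial X_j)$ to have full rank $c$; after fixing $x$ and using the $GL_{N+1}$-action, this becomes a condition on jets that cuts out a subvariety of $\mathbb{P}^N \times V$ of codimension $N+1$, whose class I would express via Chern classes of the jet bundles $\mathcal{J}^1(\mathcal{O}(d_i))$ (or rather the bundle encoding value and first derivatives). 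The key point is that $\deg_i$, $\deg_{var}$ are read off from the multidegree of $[D]$ with respect to the natural grading of $\mathrm{Sym}^\bullet V^*$, i.e. from the components of $[I]$ along the factors $H^0(\mathbb{P}^N,\mathcal{O}(d_i))^*$.

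**The Chern class computation.**

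The second step is to make this computation explicit. On $\mathbb{P}^N$, write $h = c_1(\mathcal{O}(1))$. For each $i$, consider the bundle $B_i = P^1(\mathcal{O}(d_i))$ of $1$-jets, of rank $N+1$; the evaluation of $(F_i, dF_i)$ at $x$ gives a section of $B_i \boxtimes \mathcal{O}_V(1)_i$ on $\mathbb{P}^N \times V$ where $\mathcal{O}_V(1)_i$ records the $\rho_i$-weight. The singular locus $I$ is where the induced map $\bigoplus_i (B_i \text{ jet data})$ drops rank appropriately; its class is a resultant-type polynomial. Using $c(P^1(\mathcal{O}(d))) = (1 + e\,h)^{N+1}$ up to lower-order corrections — more precisely the Chern polynomial of the jet bundle twisted so that its Chern roots are governed by $e_i = d_i - 1$ — one obtains that $[I]$, pushed to $V$, has its $\rho_i$-degree equal to the coefficient extracted from the symmetric expression in $e_1,\ldots,e_c$. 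The alternating sums $\sum_l \frac{1}{\prod_{l'\neq l}(d_l - d_{l'})}\big(\cdot\big)$ are exactly the divided differences / Lagrange-interpolation residues that appear when one expands $\prod_i (1+e_i h)^{N+1}$ against the Segre class of a direct sum and extracts the relevant graded piece; I would organize the bookkeeping via the generating function $\sum_i \deg_i\, t_i = \big[\text{coefficient of } h^N \text{ in } \prod_i \frac{(1+e_i h)^{N+1} - \text{stuff}}{\cdots}\big]$ and then match with formula \eqref{degi} using partial fractions in the $d_i$.

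**The characteristic-$2$ subtlety and the main obstacle.**

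The factor $\mu$ signals that in characteristic $2$ with $n$ even, the map $I \to D$ is \emph{not} generically one-to-one but generically two-to-one: a general singular complete intersection of even dimension in characteristic $2$ has its singularities coming in a way that doubles the count, or equivalently the discriminant hypersurface is "non-reduced to order $2$" relative to the naive incidence cycle — this is the familiar phenomenon that the Hessian/second-fundamental-form contribution degenerates in characteristic $2$, making a nodal quadric point count with multiplicity $2$. I would handle this by analyzing the local structure of $D$ along its generic point: compute the tangent cone, or equivalently study the fibre of $I \to D$ over a general $(F_1,\ldots,F_c)\in D$, showing it is a single reduced point when $\mu = 1$ and a length-$2$ or two-point scheme when $\mu = 2$, via the parity of $n$ governing whether a general corank-$1$ quadratic form in $n+1$ variables over a field of characteristic $2$ is "defective". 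This case analysis — establishing precisely when $\deg D$ equals the incidence multidegree versus half of it — is the main obstacle; the rest is a (substantial but essentially mechanical) Chern-class and partial-fraction computation. Finally, \eqref{var} follows from \eqref{degi} together with relation \eqref{degvar}, after simplifying $\sum_i d_i \deg_i$ using the identity $\sum_i \frac{d_1\cdots\hat d_i\cdots d_c}{(N+1)}\cdot(\text{divided difference of } e^{N+1}) \cdot d_i = d_1\cdots d_c \sum_l \frac{e_l^N}{\prod_{l'\neq l}(d_l-d_{l'})}$, which is itself a telescoping/Newton-identity manipulation, so I would prove \eqref{degi} first and deduce \eqref{var}.
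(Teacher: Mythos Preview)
Your strategy is a genuinely different route from the paper's. The paper does \emph{not} compute Chern classes on $\mathbb{P}^N\times V$; instead it identifies $D$ with the dual variety $X_A^\vee$ of an explicit smooth projective \emph{toric} variety $X_A$ (Proposition~\ref{descridual}), invokes the Gelfand--Kapranov--Zelevinsky determinantal formula expressing $\Delta_A$ as the determinant of a Cayley--Koszul complex (Theorem~\ref{discri}), and from this derives a purely combinatorial expression for the torus character $\Xi_A$ acting on $\langle\Delta_A\rangle$ as an alternating sum of moment integrals $\int_\Gamma u\,d\mu_\Gamma$ over the faces $\Gamma$ of the moment polytope $Q$ (Theorem~\ref{char}). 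The partial degrees $\deg_i,\deg_{var}$ are then read off as coordinates of $\Xi_A$, and the explicit formulas come from evaluating those face integrals for the concrete polytope $Q(c,N,(d_i))$ (Propositions~\ref{vol}--\ref{deuxprinc}). Your jet-bundle/Thom--Porteous approach and the paper's polytope approach are two classical ways to compute the same conormal class; yours is closer to Katz-type discriminant formulas and makes the role of $P^1(\mathcal{O}(d_i))$ transparent (and indeed $c(P^1(\mathcal{O}(d)))=(1+eh)^{N+1}$ is \emph{exact} via the twisted Euler sequence, not merely ``up to lower-order corrections''), while the paper's makes all the partial degrees appear at once as values of a single character and reduces everything to elementary integrals.

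Where your proposal falls short of a proof is precisely the part you label ``substantial but essentially mechanical'': you never specify the degeneracy locus set-up (do you work on $\mathbb{P}^N\times V$ directly, or on the resolution $\mathbb{P}^{c-1}\times\mathbb{P}^N\times V$ carrying the relation $(y_i)$ among the $dF_i$?), you do not write down the class whose pushforward gives $[D]$, and you do not perform the partial-fraction extraction that would actually produce the divided differences $\sum_l \frac{(\cdot)}{\prod_{l'\neq l}(d_l-d_{l'})}$. In the paper this corresponds to several pages of genuine computation (the face integrals of $\alpha_c$ over $Q$, an inclusion--exclusion over faces, repeated applications of Lemma~\ref{polyn}, and the binomial manipulations in Proposition~\ref{premprinc}); the analogue on your side is comparable in length and is the heart of the matter, not bookkeeping. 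Your treatment of the factor $\mu$ in characteristic $2$, on the other hand, is essentially the same as the paper's (Lemmas~\ref{icgood}--\ref{ptdbl} and Proposition~\ref{mu}): one shows that a general $H\in D$ cuts out a unique ordinary double point, and then computes the length of the singular scheme via the Fitting ideal of $\Omega^1$ --- note that in the bad case the fibre is a single \emph{non-reduced} point of length~$2$ (the map $p_2$ is purely inseparable), not two reduced points. Your deduction of \eqref{var} from \eqref{degi} via \eqref{degvar} matches the paper's Proposition~\ref{deuxprinc} exactly.
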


\begin{rem}
Il faut interpr\'eter cet \'enonc\'e, comme tous les \'enonc\'es similaires de ce texte, de la mani\`ere suivante : 
le terme de gauche est une fonction polynomiale en les $d_l$ dont le polyn\^ome est donn\'e par le terme de droite.
Que le terme de droite soit un polyn\^ome en les $d_l$ est cons\'equence du lemme \ref{polyn} $(i)$.
\end{rem}

\begin{rem}\label{remconv}
Dans l'\'egalit\'e (\ref{degi}), il faut interpr\'eter $\frac{e_i^{N+1}-e_i^{N+1}}{e_i-e_i}$ comme une notation pour le polyn\^ome $(N+1)e_i^N$.

\end{rem}

\begin{rem}

Par (\ref{deg}), on peut \'egalement calculer $\deg$. 
Cependant, on ne peut simplifier avantageusement l'expression obtenue. M\^eme pour $c=2$, 
l'exemple \ref{c2} montre que la formule pour $\deg$ est nettement moins \'el\'egante que celles pour $\deg_{var}$, $\deg_1$ et $\deg_2$.

\end{rem}

On d\'eduira de ce th\'eor\`eme le r\'esultat qui suit :

\begin{thm}\label{princ2}
Soit $K=\mathbb{Q}$, de sorte qu'on peut consid\'erer le discriminant $\Delta$ comme un polyn\^ome irr\'eductible \`a coefficients entiers. Soit $p$ un nombre premier. 
Alors la r\'eduction modulo $p$ de $\Delta$ est irr\'eductible si $p\neq 2$ ou si $n$ est impair. C'est le carr\'e d'un polyn\^ome irr\'eductible 
si $p=2$ et $n$ est pair.
\begin{item}
 
\end{item}

\end{thm}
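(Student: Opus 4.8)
\subsection*{Proof proposal}

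The plan is to deduce Theorem~\ref{princ2} from Theorem~\ref{princ} together with the irreducibility of the discriminant locus in every characteristic (Lemme~\ref{irred}). Fix a prime $p$. If $d_1=\cdots=d_c=1$ and $c<N+1$, then by Remarque~\ref{remdef} one has $\codim_V(D)>1$ both over $\mathbb{Q}$ and over $\mathbb{F}_p$, so $\Delta=1=\Delta_p$ and there is nothing to prove; we may therefore assume we are not in this case, so that $\codim_V(D)=1$ and $\deg>0$ over every base field. Write $\Delta_p$ for an equation of the discriminant over $\mathbb{F}_p$; since $D_{\mathbb{F}_p}$ is reduced and irreducible, $\Delta_p$ is an irreducible polynomial in $\mathbb{F}_p[\,\ldots]$.

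The first step is to show that the reduction $\overline{\Delta}$ of $\Delta$ modulo $p$ is, up to a nonzero scalar, a power of $\Delta_p$. Choose $\Delta$ primitive in $\mathbb{Z}[\,\ldots]$; then $\overline{\Delta}\neq 0$, and it is homogeneous of the same multidegree as $\Delta$, in particular of total degree $\deg^{\mathbb{Q}}>0$. The condition "$\{F_1=\cdots=F_c=0\}$ is not smooth of codimension $c$" cuts out a closed subscheme $\mathcal{Z}\subseteq V_{\mathbb{Z}}$, the affine space over $\mathbb{Z}$ parametrizing $c$-tuples of forms of the given degrees, whose base change to $\mathbb{Q}$ (resp. to $\mathbb{F}_p$) is set-theoretically $D_{\mathbb{Q}}=V(\Delta)_{\mathbb{Q}}$ (resp. $D_{\mathbb{F}_p}=V(\Delta_p)$). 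Since $\mathbb{Z}[\,\ldots]$ is a unique factorisation domain and $\Delta$ is primitive, no irreducible factor of $\Delta$ divides an integer, so every component of the hypersurface $V(\Delta)\subseteq V_{\mathbb{Z}}$ dominates $\Spec\mathbb{Z}$; hence $V(\Delta)$ equals the closure of its generic fibre $V(\Delta)_{\mathbb{Q}}=D_{\mathbb{Q}}$. As $D_{\mathbb{Q}}\subseteq\mathcal{Z}$ and $\mathcal{Z}$ is closed, $V(\Delta)\subseteq\mathcal{Z}$, and reducing modulo $p$ gives $V(\overline{\Delta})=V(\Delta)_{\mathbb{F}_p}\subseteq\mathcal{Z}_{\mathbb{F}_p}=V(\Delta_p)$ inside $V_{\mathbb{F}_p}$. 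Because $\Delta_p$ is irreducible and $\overline{\Delta}$ has positive degree, every irreducible factor of $\overline{\Delta}$ is a scalar multiple of $\Delta_p$, whence $\overline{\Delta}=\lambda\,\Delta_p^{\,k}$ for some $\lambda\in\mathbb{F}_p^{\times}$ and some integer $k\geq 1$.

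The second step is to identify $k$ by comparing total degrees: from $\overline{\Delta}=\lambda\Delta_p^{\,k}$ one reads off $\deg^{\mathbb{Q}}=k\deg^{\mathbb{F}_p}$. By Theorem~\ref{princ} together with relation~(\ref{deg}), the integer $\deg$ equals a polynomial expression in $d_1,\ldots,d_c$ that does not depend on the characteristic, divided by $\mu$, where $\mu=1$ over $\mathbb{Q}$ and, over $\mathbb{F}_p$, $\mu=1$ if $p\neq 2$ or $n$ is odd while $\mu=2$ if $p=2$ and $n$ is even; hence $\deg^{\mathbb{F}_p}=\mu^{-1}\deg^{\mathbb{Q}}$. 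Since $\deg^{\mathbb{Q}}\neq 0$, we get $k=\mu$. Therefore $\overline{\Delta}=\lambda\Delta_p$ is irreducible when $p\neq 2$ or $n$ is odd, and $\overline{\Delta}=\lambda\Delta_p^{\,2}$ is the square of an irreducible polynomial when $p=2$ and $n$ is even, which is the assertion.

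The only delicate point is the inclusion $V(\overline{\Delta})\subseteq V(\Delta_p)$ of the first step, i.e. the statement that the integral model $V(\Delta)\subseteq V_{\mathbb{Z}}$ acquires no spurious component in characteristic $p$; this rests on the primitivity of $\Delta$ and on the fact that non-smoothness of the universal complete intersection is a closed condition over $\mathbb{Z}$. Granting Theorem~\ref{princ}, everything else is a degree count.
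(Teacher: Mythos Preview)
Your argument is correct and follows the same route as the paper: spread the situation out over $\Spec(\mathbb{Z})$, observe that the reduction of $\Delta$ modulo $p$ vanishes on the (irreducible) discriminant locus over $\mathbb{F}_p$ and hence is a power of $\Delta_p$, then read off the exponent by comparing degrees via Theorem~\ref{princ}. Your treatment of the first step---using primitivity of $\Delta$ to ensure $V(\Delta)\subset V_{\mathbb{Z}}$ is the closure of its generic fibre, then passing to the special fibre inside the universal non-smooth locus---is in fact more explicit than the paper's, which simply asserts that the reduction vanishes precisely on the discriminant locus.
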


\subsection{Quelques exemples}

On commence par illustrer le th\'eor\`eme \ref{princ}. 
On suppose pour simplifier que $\mu=1$. Par exemple, on peut prendre $K$ de caract\'eristique diff\'erente de $2$.

\begin{ex}

Quand $c=1$, $D$ est l'ensemble des \'equations d'hypersurfaces singuli\`eres de degr\'e $d_1$ dans $\mathbb{P}^N$ : 
$\Delta$ est donc le discriminant usuel. Les formules (\ref{degi}) et (\ref{var}) montrent qu'on a alors :
\begin{alignat*}{2}
   \deg_1 &=(N+1)e_1^N\\
   \deg_{var}&=e_1^N.
\end{alignat*}
On retrouve la formule classique de Boole, qu'on pourra par exemple trouver dans \cite{GKZ} Chap.1, 4.15 et Chap.9, 2.10a.

\end{ex}

\begin{ex}

Quand $c=N+1$, $D$ est l'ensemble des $N+1$-uplets d'\'equations homog\`enes admettant une solution commune dans $\mathbb{P}^N$ : 
$\Delta$ est donc le r\'esultant usuel. 

Pour \'evaluer la formule (\ref{degi}) dans ce cas, on peut remarquer que la quantit\'e $\sum_{l=1}^{c}\frac{1}{\prod_
{l'\neq l}(e_l-e_{l'})}\left(\frac{e_i^{N+1}-e_l^{N+1}}{e_i-e_l}\right)$ est un polyn\^ome de degr\'e $0$ en les $e_j$, 
c'est-\`a-dire une constante. Pour la calculer, on peut faire par exemple $d_j=j$, de sorte que $\prod_
{l'\neq l}(e_l-e_{l'})=(-1)^{N+1-l}(l-1)!(N+1-l)!$ . L'expression obtenue s'\'evalue facilement \`a l'aide de la formule 
du bin\^ome. Tous calculs faits, cette constante vaut $1$, et on obtient :
\begin{alignat*}{2}
   \deg_i &=d_1\ldots\hat{d}_i\ldots d_{N+1}\\
   \deg_{var}&=d_1\ldots d_{N+1}.
\end{alignat*}
L\`a encore, ces formules sont classiques. On peut les trouver dans \cite{GKZ} Chap.13, 1.1.

\end{ex}

\begin{ex}

Quand $d_1=\ldots=d_c=d$, $D$ est l'ensemble des syst\`emes lin\'eaires de degr\'e $d$ et de dimension $c-1$ dont le lieu de base est singulier. 

Dans ce cas particulier \'egalement, les formules g\'en\'erales se simplifient nettement. Comme, par sym\'etrie, tous les $\deg_i$ sont \'egaux, 
il suffit par (\ref{degvar}) et (\ref{deg}) de calculer $\deg_{var}$. Dans l'expression (\ref{var}), $\sum_{l=1}^{c}\frac{e_l^N}{\prod_{l'\neq l}(e_l-e_{l'})}$ 
est un polyn\^ome homog\`ene de degr\'e $N-c+1$ en les $e_j$. Par cons\'equent, $\deg_{var}=\lambda d^c e^{N-c+1}$ o\`u $\lambda$ est une constante \`a calculer. 
On \'evalue cette constante en faisant $d_l=l\varepsilon$ dans le polyn\^ome $\sum_{l=1}^{c}\frac{e_l^N}{\prod_{l'\neq l}(e_l-e_{l'})}$, en calculant 
cette quantit\'e \`a l'aide de la formule du bin\^ome, puis en faisant tendre $\varepsilon$ vers $0$. Tous calculs faits, 
\begin{alignat*}{2}
   \deg_i &=\binom{N+1}{c}d^{c-1}(d-1)^{N-c+1}\\
   \deg_{var}&=\binom{N}{c-1}d^{c}(d-1)^{N-c+1}.
\end{alignat*}
Ces formules auraient aussi pu \^etre obtenues \`a l'aide de r\'esultats de \cite{GKZ}, par exemple de Chap.13, 2.5.

Supposons de plus $d=1$ et $c<N+1$. Les degr\'es s'annulent : cela correspond aux cas o\`u $D$ est de codimension $>1$ dans $V$ (voir la remarque \ref{remdef}).

Supposons enfin $d=1$ et $c=N+1$, le polyn\^ome $\Delta$ est le d\'eterminant usuel d'une matrice de taille $N+1$ ; 
on retrouve son degr\'e total $\deg=N+1$.
\end{ex}

\begin{ex}\label{c2}

Sp\'ecialisons maintenant les formules (\ref{degi}) et (\ref{var}) au cas d'intersections compl\`etes de codimension $2$, c'est-\`a-dire $c=2$. Il vient :
\begin{alignat*}{3}
   \deg_1 &=d_2(e_2^{N-1}+2e_1e_2^{N-2}+\ldots+Ne_1^{N-1})\\
\deg_2 &=d_1(e_1^{N-1}+2e_2e_1^{N-2}+\ldots+Ne_2^{N-1})\\
   \deg_{var}&=d_1d_2\frac{e_2^N-e_1^N}{e_2-e_1}.
\end{alignat*}
\end{ex}

Illustrons enfin le th\'eor\`eme \ref{princ2} en explicitant un cas particulier classique :

\begin{ex}

Quand $N=1$ et $c=1$, $\Delta$ est le discriminant usuel d'un polyn\^ome en une variable, vu comme un polyn\^ome homog\`ene en deux variables. 
Sa description classique en fonction des racines de ce polyn\^ome 
(voir par exemple \cite{Prasolov} 1.3.2) montre que ce polyn\^ome est irr\'eductible en caract\'eristique diff\'erente de $2$,
et le carr\'e d'un polyn\^ome irr\'eductible en caract\'eristique $2$. Comme $n=0$ est pair, c'est ce que pr\'edit le th\'eor\`eme \ref{princ2}.

Signalons le cas particulier bien connu o\`u $d=2$. Le discriminant du polyn\^ome $aX^2+bX+c$ est $b^2-4ac$. 
C'est toujours irr\'eductible, sauf en caract\'eristique $2$, $b^2$ \'etant visiblement un carr\'e.
\end{ex}

\subsection{Strat\'egie de la d\'emonstration}

Dans la preuve du th\'eor\`eme \ref{princ}, on peut, quitte \`a le remplacer par une cl\^oture alg\'ebrique, choisir $K$ alg\'ebriquement clos.
On commence de plus par supposer $K$ de caract\'eristique $0$.

La proposition \ref{descridual} permet d'interpr\'eter 
$D$ comme 
la vari\'et\'e duale d'une vari\'et\'e torique lisse convenable.
 Dans leur livre \cite{GKZ}, Gelfand, Kapranov et Zelevinsky ont \'etudi\'e
 ces vari\'et\'es ; ils obtiennent notamment une formule combinatoire 
permettant de calculer le degr\'e de la vari\'et\'e duale d'une vari\'et\'e torique 
lisse (\cite{GKZ} Chap.9, 2.8).

  Dans la deuxi\`eme partie de ce texte, on g\'en\'eralise cet \'enonc\'e pour obtenir une
formule analogue calculant des degr\'es d'homog\'en\'eit\'e partiels : c'est l'objet du 
th\'eor\`eme \ref{char}. On utilise de mani\`ere cruciale les r\'esultats de \cite{GKZ}.

Dans la troisi\`eme partie de ce texte, on d\'emontre le th\'eor\`eme \ref{princ} en caract\'e\-ristique $0$ en \'evaluant cette formule dans notre cas particulier. 
Les calculs sont men\'es dans les propositions \ref{premprinc} et \ref{deuxprinc}.
\\

Finalement, on explique dans la quatri\`eme partie les modifications \`a apporter \`a la preuve du th\'eor\`eme \ref{princ} pour qu'elle 
fonctionne en toute caract\'eristique. 

Le seul obstacle que l'on rencontre sont les sp\'ecificit\'es de la th\'eorie de la dualit\'e projective en caract\'eristique finie, 
qui ont \'et\'e \'etudi\'ees tout d'abord par Wallace \cite{Wallace}, et 
au sujet desquelles on pourra consulter le survey \cite{Kleiman} de Kleiman. Elles ont pour cons\'equence que le r\'esultat de \cite{GKZ} que nous utilisons 
a besoin d'\^etre l\'eg\`erement modifi\'e pour valoir en caract\'eristique finie. Cette modification effectu\'ee, la preuve est identique.

Il convient de remarquer que si ces arguments suppl\'ementaires sont indispensables en toute caract\'eristique finie, le th\'eor\`eme \ref{princ} ne
voit son \'enonc\'e  modifi\'e qu'en caract\'eristique $2$.

Enfin, le th\'eor\`eme \ref{princ2} se d\'eduit ais\'ement du th\'eor\`eme \ref{princ}.

\section{Duale d'une vari\'et\'e torique}\label{part1}

Dans cette partie, $K$ est suppos\'e alg\'ebriquement clos de caract\'eristique $0$.

L'objectif est de montrer le th\'eor\`eme \ref{char}. 
\'Etant donn\'ee une vari\'et\'e torique projective lisse, 
on calcule combinatoirement l'action du tore sur l'\'equation de sa vari\'et\'e duale.

\subsection{Notations}

On commence par fixer des notations.

Soit $0\to\mathbb{G}_m\to\tilde{T}\to T\to 0$ une suite exacte courte de tores, $\tilde{T}$ \'etant de dimension $k\geq1$ et $T$ 
de dimension $k-1$. Soit $0\to \mathfrak{X}(T)\to \mathfrak{X}(\tilde{T})\stackrel{h}{\rightarrow}\mathbb{Z}\to 0$ la suite exacte courte de leurs 
groupes de caract\`eres. On notera $\mathfrak{X}_1=h^{-1}(1)$ : c'est un espace principal homog\`ene sous $\mathfrak{X}(T)$.

Soit $A=\{\chi_1,\ldots,\chi_{|A|}\}\subset \mathfrak{X}_1$ un sous-ensemble fini engendrant $\mathfrak{X}_1$ comme espace affine. 
On notera $\tilde{X}_A\subset(K^A)^*$ la vari\'et\'e torique affine de tore $\tilde{T}$ associ\'ee. 
C'est, par d\'efinition, l'adh\'erence des $(\chi_1(t),\ldots,\chi_{|A|}(t))$, $t\in\tilde{T}$. 
La vari\'et\'e $\tilde{X}_A$ est un c\^one ; on peut consid\'erer son projectivis\'e $X_A\subset\mathbb{P}((K^A)^*)$ 
qui est une vari\'et\'e torique projective de tore $T$. Le polytope correspondant est l'enveloppe convexe 
$Q$ de $A$ dans $\mathfrak{X}_{1,\mathbb{R}}=\mathfrak{X}_1\otimes_{\mathbb{Z}}\mathbb{R}$.

Notons $X^{\vee}_A\subset\mathbb{P}(K^A)$ la vari\'et\'e duale de $X_A$ munie
de sa structure r\'eduite et $\tilde{X}^{\vee}_A\subset K^A$ son c\^one affine. On note $\Delta_A$ 
une \'equation homog\`ene de $X^{\vee}_A$ (par convention, $\Delta_A=1$ si $X_A$ est d\'efective, c'est-\`a-dire si $\codim_{\mathbb{P}(K^A)}(X^{\vee}_A)>1$).
Le tore $\tilde{T}$ agit sur $(K^A)^*$ en pr\'eservant $\tilde{X}_A$. Par cons\'equent, 
l'action duale $t\cdot \sum c_a\chi_a\mapsto\sum c_a\chi_a^{-1}(t)\chi_a$ de $\tilde{T}$ 
sur $K^A$ pr\'eserve $\tilde{X}^{\vee}_A$. L'action induite de $\tilde{T}$ sur $\Sym^{\bullet}(K^A)^*$ 
pr\'eserve donc la droite $\left\langle\Delta_A \right\rangle$ ; cette action se fait via un caract\`ere 
de $\tilde{T}$ que l'on notera $\Xi_A$ : $t\cdot\Delta_A=\Xi_A(t)\Delta_A$. Ainsi, on a :
\begin{equation}\label{defxi}
\Delta_A(\sum c_a\chi_a(t)\chi_a)=\Xi_A(t)\Delta_A(\sum c_a\chi_a).
\end{equation}
En sp\'ecialisant cette identit\'e \`a des sous-groupes \`a un param\`etre convenables, on peut calculer les degr\'es d'homog\'en\'eit\'e 
partiels de $\Delta_A$. Par exemple, en sp\'ecialisant au sous-groupe \`a un param\`etre correspondant au cocaract\`ere $h$, il vient 
$\Delta_A(\lambda f)=\lambda^{h(\Xi_A)}\Delta_A(f)$ pour tout $f\in K^A$, soit $$\deg(\Delta_A)=h(\Xi_A).$$

\subsection{\'Equation de la duale}

  Le th\'eor\`eme \cite{GKZ} Chap. 9, 2.8  d\^u \`a Gelfand, Kapranov et Zelevinsky
permet de calculer $\deg(\Delta_A)=h(\Xi_A)$ 
en fonction du polytope $Q$ si $X_A$ est lisse.
Nous allons g\'en\'eraliser cet \'enonc\'e en obtenant une formule pour $\Xi_A$. La
d\'emonstration est tr\`es proche de celle de \cite{GKZ}, et utilise de mani\`ere essentielle les r\'esultats de ce livre. 
En particulier, elle repose sur une formule pour $\Delta_A$ que nous rappelons dans ce paragraphe.

Si $\Gamma$ est une face de $Q$, on consid\`erera $\tilde{\Gamma}$ le c\^one sur $\Gamma$ de sommet $0$ dans 
$\mathfrak{X}(\tilde{T})_{\mathbb{R}}$, $\Gamma^0$ et $\tilde{\Gamma}^0$ leurs int\'erieurs relatifs, $\Gamma_{\mathbb{R}}$ 
le sous-espace affine de $\mathfrak{X}_{1,\mathbb{R}}$ engendr\'e par $\Gamma$, 
et $\tilde{\Gamma}_{\mathbb{R}}$ le sous-espace vectoriel 
de $\mathfrak{X}(\tilde{T})_{\mathbb{R}}$ engendr\'e par $\tilde{\Gamma}$.
L'espace affine $\Gamma_{\mathbb{R}}$ est muni du r\'eseau naturel $\Gamma_{\mathbb{Z}}=\Gamma_{\mathbb{R}}\cap \mathfrak{X}_1(T)$ ; on notera $\mu_{\Gamma}$ 
la mesure de Lebesgue sur $\Gamma_{\mathbb{R}}$ normalis\'ee de sorte \`a ce que le simplexe unit\'e soit de mesure $1$. De m\^eme, on notera 
$\tilde{\Gamma}_{\mathbb{Z}}$ le r\'eseau $\tilde{\Gamma}_{\mathbb{R}}\cap \mathfrak{X}(\tilde{T})$ de $\tilde{\Gamma}_{\mathbb{R}}$.

 On note $S$ le semi-groupe de $\mathfrak{X}(\tilde{T})$ engendr\'e par $A$.
Si $u\in S$, on note $\tilde{\Gamma}(u)$ la plus petite face de $\tilde{Q}$ contenant $u$. Si $l\geq 0$, on notera $S_l$ (resp. $\tilde{\Gamma}_l$) 
l'ensemble des $u\in S$ (resp. des $u\in \tilde{\Gamma}$) tels que $h(u)=l$. Gardons en m\'emoire que :
\begin{equation} \label{restr}
S_l=\tilde{Q}_l\cap \mathfrak{X}(\tilde{T})\text{ pour }l\gg0.
\end{equation}
En effet c'est la traduction combinatoire de la surjectivit\'e de l'application 
de restriction $H^0(\mathbb{P}((K^A)^*),\mathcal{O}(l))\to H^0(X_A,\mathcal{O}(l))$, elle-m\^eme cons\'equence du th\'eo\-r\`eme d'annulation de Serre.

On consid\`ere les $K$-espaces vectoriels :
$$C^i(A,l)=\bigoplus_{u\in S_{i+l}}\bigwedge^i(\tilde{\Gamma}(u)_{\mathbb{Z}}\otimes_{\mathbb{Z}}K).$$
Si $f=\sum c_a\chi_a\in K^A$, on d\'efinit des applications lin\'eaires
\begin{alignat*}{2}
  \partial_f:C^i(A,l)&\to C^{i+1}(A,l)\\
   (u,\omega)&\mapsto-\sum c_a(u+\chi_a,\chi_a\wedge\omega),
\end{alignat*}
de sorte que $(C^\bullet(A,l),\partial_f)$ soit un complexe. Soit de plus $e=(e(i))$ la donn\'ee de bases de chacun des $C^i(A,l)$.

\begin{thm}[\cite{GKZ} Chap.9, 2.7]\label{discri}
On suppose que $X_A$ est lisse. Alors, si $l\gg0$, on a 
\begin{equation*}
 \Delta_A(f)=\det(C^\bullet(A,l),\partial_f,e)^{(-1)^k},
\end{equation*}
o\`u chacun des deux termes est bien d\'efini \`a une constante multiplicative non nulle pr\`es.
\end{thm}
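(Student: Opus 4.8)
Esquissons la d\'emonstration de \cite{GKZ}. Le point de d\'epart est l'interpr\'etation g\'eom\'etrique de $\tilde{X}^{\vee}_A$ : comme $X_A$ est lisse, un point de $\tilde{X}^{\vee}_A$ est une forme $f=\sum_a c_a\chi_a\in K^A$ telle que l'hyperplan $\{f=0\}$ soit tangent \`a $X_A$ en un point, autrement dit telle que le $1$-jet $j^1(f)$ de la section correspondante de $\mathcal{O}(1)$ s'annule quelque part sur $X_A$. Comme $j^1(f)$ est une section du fibr\'e $J^1(\mathcal{O}(1))$, de rang $k=\dim X_A+1$, on lui associe son complexe de Koszul
\begin{equation*}
0\to\bigwedge\nolimits^{k}J^1(\mathcal{O}(1))^{\vee}\to\cdots\to J^1(\mathcal{O}(1))^{\vee}\to\mathcal{O}_{X_A}\to 0,
\end{equation*}
qui est exact hors du lieu des z\'eros de $j^1(f)$, donc exact partout si et seulement si $f\notin\tilde{X}^{\vee}_A$.

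On tord ce complexe par $\mathcal{O}(l)$ : pour $l\gg0$, l'annulation de Serre tue la cohomologie sup\'erieure de chacun de ses termes, de sorte que l'hypercohomologie du complexe tordu est calcul\'ee par le complexe de ses sections globales. D'apr\`es la th\'eorie du d\'eterminant d'un complexe d\'evelopp\'ee dans \cite{GKZ}, le d\'eterminant de ce complexe de dimension finie est une fonction rationnelle de $f$ dont le diviseur est support\'e sur le lieu o\`u le complexe n'est pas exact, \`a savoir $\tilde{X}^{\vee}_A$ ; comme $X_A$ est lisse, ce lieu est r\'eduit, et il reste \`a voir que la multiplicit\'e y vaut $1$. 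Ce d\'eterminant est alors, \`a une constante multiplicative non nulle pr\`es, \'egal \`a $\Delta_A(f)^{(-1)^k}$ (une constante non nulle lorsque $X_A$ est d\'efective, en accord avec la convention $\Delta_A=1$), le signe $(-1)^k$ \'etant celui de la somme altern\'ee des dimensions des termes du complexe.

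Il reste l'\'etape combinatoire, qui est le point technique d\'elicat. Tous les fibr\'es en jeu \'etant $\tilde{T}$-\'equivariants, on d\'ecompose le complexe de sections globales selon le poids $u\in\mathfrak{X}(\tilde{T})$ ; gr\^ace \`a (\ref{restr}) les poids qui contribuent au $i$-\`eme terme sont ceux de $S_{i+l}$, et, la lissit\'e de $X_A$ for\c{c}ant tous les c\^ones de l'\'eventail \`a \^etre r\'eguliers, la composante de poids $u$ s'identifie \`a $\bigwedge^i(\tilde{\Gamma}(u)_{\mathbb{C}})$, la diff\'erentielle de Koszul associ\'ee \`a $j^1(f)$ devenant pr\'ecis\'ement $\partial_f$ ; on retrouve ainsi le complexe $(C^\bullet(A,l),\partial_f)$. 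L'obstacle principal est donc ce dernier calcul, \`a savoir l'identification pr\'ecise des composantes de poids du complexe de Koszul du $1$-jet avec les $\bigwedge^i\tilde{\Gamma}(u)_{\mathbb{C}}$ (o\`u la lissit\'e joue un r\^ole essentiel), ainsi que la v\'erification que la multiplicit\'e vaut $1$ et l'identification du signe ; ces points \'etant trait\'es en d\'etail dans \cite{GKZ}, on se contenterait d'y renvoyer.
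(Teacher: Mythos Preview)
Le papier ne d\'emontre pas ce th\'eor\`eme : il se contente de le citer comme r\'esultat de \cite{GKZ} Chap.~9, 2.7, sans esquisse de preuve. Ton esquisse est donc un ajout par rapport au texte, et elle restitue correctement l'architecture de la preuve de \cite{GKZ} (complexe de Koszul du $1$-jet, torsion et annulation de Serre, identification combinatoire des poids). Un point \`a pr\'eciser toutefois : quand tu \'ecris que \og comme $X_A$ est lisse, ce lieu est r\'eduit, et il reste \`a voir que la multiplicit\'e y vaut $1$\fg, l'argument r\'eel n'est pas la lissit\'e de $X_A$ mais la \emph{r\'eflexivit\'e} en caract\'eristique $0$, qui assure que $p_2:W_{X_A}\to X_A^{\vee}$ est birationnelle (c'est le th\'eor\`eme \cite{GKZ} Chap.~2, 2.5, invoqu\'e page~59). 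Le papier insiste pr\'ecis\'ement sur ce point dans la preuve du th\'eor\`eme~\ref{discribis}, o\`u cette birationalit\'e est remplac\'ee par un morphisme g\'en\'eriquement fini de degr\'e~$\mu$ en caract\'eristique finie ; c'est l\`a, et non dans la lissit\'e, que r\'eside l'obstruction.
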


Pour la d\'efinition du d\'eterminant d'un complexe exact muni de bases, on renvoie \`a \cite{GKZ} App.A. On rappelle seulement 
ci-dessous la proposition facile \cite{GKZ} App.A 9, car nous en aurons besoin pour la suite, et que les signes sont malheureusement faux dans \cite{GKZ}.

\begin{prop}\label{basechange}

Soit $(W^{\bullet},d)$ un complexe exact, et $e=(e(i))$, $e'=(e'(i))$ deux jeux de bases. 

Soient $M(i)$ les matrices de transition : $e(i)_p=\sum_{q=1}^{\dim W_i}M(i)_{p,q}e'(i)_q$. Alors,
$$\det(W^{\bullet},d,e')=\det(W^{\bullet},d,e)\prod_i\det(M(i))^{(-1)^{i}}.$$

\end{prop}

\subsection{Action du tore}

\'Enon\c{c}ons enfin la formule pour $\Xi_A$ et d\'emontrons-la.
\begin{thm}\label{char}
\begin{equation*}
\Xi_A=\sum_{\Gamma\subset Q}(-1)^{\codim(\Gamma)}(\dim(\Gamma)+1)\int_{\Gamma}ud\mu_{\Gamma}(u).
\end{equation*}

\end{thm}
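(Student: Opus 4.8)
\textbf{Plan de la démonstration du théorème \ref{char}.}
L'idée est de partir de la description de $\Delta_A$ via le déterminant du complexe $(C^\bullet(A,l),\partial_f,e)$ fournie par le théorème \ref{discri}, et d'y faire agir le tore $\tilde T$ pour en extraire le caractère $\Xi_A$. Concrètement, je fixerais $l\gg 0$ et $t\in\tilde T$, et je comparerais $\Delta_A(t\cdot f)$ à $\Delta_A(f)$ en utilisant que $t$ agit de façon compatible sur le complexe : l'action duale $c_a\mapsto \chi_a^{-1}(t)c_a$ envoie $\partial_f$ sur $\partial_{t\cdot f}$ à conjugaison près par les applications diagonales $D^i_t:C^i(A,l)\to C^i(A,l)$ qui multiplient le facteur indexé par $u\in S_{i+l}$ par le scalaire $\chi_u(t)$ (noter $\chi_u$ le caractère correspondant à $u$, licite puisque $u\in\mathfrak X(\tilde T)$). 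Autrement dit on a un isomorphisme de complexes $(C^\bullet,\partial_{t\cdot f})\xrightarrow{\sim}(C^\bullet,\partial_f)$ réalisé en degré $i$ par $D^i_t$ (au signe/scalaire près issu du facteur $\chi_a(t)$ absorbé dans les flèches).

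La proposition \ref{basechange} transforme alors cette observation en une identité numérique : appliquer $D^i_t$ revient à un changement de bases de matrice diagonale $D^i_t$ en degré $i$, donc
\begin{equation*}
\Delta_A(t\cdot f)=\Delta_A(f)\cdot\prod_i\det(D^i_t)^{(-1)^{i}\cdot(-1)^k}.
\end{equation*}
Par définition de $\Xi_A$, ceci donne $\Xi_A(t)=\prod_i\det(D^i_t)^{(-1)^{i+k}}$, ou additivement au niveau des caractères
\begin{equation*}
\Xi_A=(-1)^k\sum_{i\geq 0}(-1)^i\sum_{u\in S_{i+l}}\operatorname{rg}\bigl(\textstyle\bigwedge^i\tilde\Gamma(u)_{\mathbb C}\bigr)\cdot u\ =\ (-1)^k\sum_{i\geq 0}(-1)^i\sum_{u\in S_{i+l}}\binom{\dim\tilde\Gamma(u)}{i}u,
\end{equation*}
puisque $\det(D^i_t)=\prod_{u\in S_{i+l}}\chi_u(t)^{\dim\bigwedge^i\tilde\Gamma(u)_{\mathbb C}}$ et que $\tilde\Gamma(u)$ est un cône, donc $\dim\tilde\Gamma(u)_{\mathbb C}=\dim\tilde\Gamma(u)=\dim\Gamma(u)+1$ où $\Gamma(u)$ est la face de $Q$ correspondante.

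Le cœur du travail est ensuite purement combinatoire : il faut ré-sommer cette expression en regroupant les $u$ selon la face $\tilde\Gamma$ de $\tilde Q$ (de manière équivalente la face $\Gamma$ de $Q$) telle que $\tilde\Gamma(u)=\tilde\Gamma$, c'est-à-dire $u\in\tilde\Gamma^0\cap\mathfrak X(\tilde T)$, et reconnaître dans la somme alternée intérieure en $i$ une intégrale. Pour une face $\Gamma$ fixée de dimension $\dim\Gamma$, on a $\dim\tilde\Gamma=\dim\Gamma+1$, donc $\sum_i(-1)^i\binom{\dim\Gamma+1}{i}$ restreint aux $u\in\tilde\Gamma_{i+l}^0$ ; comme $\tilde\Gamma^0_m=\tilde Q^0_m\cap\mathfrak X(\tilde T)$ pour $m\gg 0$ par (\ref{restr}) et sa variante sur les faces, la somme $\sum_{u\in\tilde\Gamma^0,\ h(u)=i+l}u$ est, à $i$ fixé et $l$ grand, un \og polynôme de Ehrhart vectoriel \fg{} en $i$ : c'est une fonction polynomiale en $i$ dont le terme dominant, de degré $\dim\tilde\Gamma=\dim\Gamma+1$, est donné par $\int$ sur $\tilde\Gamma$ du point variable contre la mesure $\mu_{\tilde\Gamma}$, les termes de degré inférieur provenant des contributions de bord. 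L'astuce classique de GKZ (Chap. 9, démonstration de 2.8) est que l'opérateur de différence finie itéré $\sum_i(-1)^i\binom{\dim\Gamma+1}{i}$ appliqué à un polynôme de degré $\dim\Gamma+1$ ne garde que $(-1)^{\dim\Gamma+1}(\dim\Gamma+1)!$ fois le coefficient dominant, donc ici récupère exactement $(-1)^{\dim\Gamma+1}(\dim\Gamma+1)\int_{\Gamma}u\,d\mu_\Gamma(u)$ après passage de $\tilde\Gamma$ à $\Gamma$ (le cône sur $\Gamma$ : l'intégrale vectorielle sur la tranche à hauteur $1$, qui est $\Gamma$, fait apparaître le facteur $\dim\Gamma+1$ au lieu de $(\dim\Gamma+1)!$, via le calcul du volume d'un cône tronqué). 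Combiné avec $(-1)^k$ et $\codim\Gamma=k-1-\dim\Gamma$ (de sorte que $(-1)^k(-1)^{\dim\Gamma+1}=(-1)^{\codim\Gamma}$), on obtient la formule annoncée. Le principal obstacle sera de mener proprement cette étape de \og différence finie vers intégrale \fg{}, en justifiant la polynomialité des sommes vectorielles $\sum_{u\in\tilde\Gamma^0,h(u)=m}u$ pour $m\gg 0$ et en contrôlant que les termes de degré $\leq\dim\Gamma$ sont tués par l'opérateur aux différences — point pour lequel on s'appuiera directement sur les lemmes de comptage de points entiers de \cite{GKZ} Chap. 9.
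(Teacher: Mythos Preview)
Your plan is correct and coincides with the paper's own proof: the paper introduces exactly your diagonal maps (denoted $g^{i,l}_t$), applies Proposition~\ref{basechange} and Theorem~\ref{discri} to obtain $\Xi_A(t)=\prod_i\det(g^{i,l}_t)^{(-1)^{i+k}}$, regroups by faces via~(\ref{restr}), and then kills the lower-order terms by the finite-difference identity. The only cosmetic difference is that the paper records the two auxiliary facts you invoke (vector-valued Ehrhart polynomiality with leading term $\tfrac{l^{\dim\Gamma+1}}{(\dim\Gamma)!}\int_\Gamma u\,d\mu_\Gamma$, and $\sum_i(-1)^i\binom{N}{i}P(X+i)=(-1)^N N!\,a_N$) as separate lemmas rather than citing GKZ directly.
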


\begin{proof}[$\mathbf{Preuve}$]
 
Soit $t\in\tilde{T}$. On introduit les applications lin\'eaires :
\begin{alignat}{2}\label{gilt}
  g^{i,l}_t:C^i(A,l)&\to C^{i}(A,l)\\
   (u,\omega)&\mapsto u(t)(u,\omega)\notag.
\end{alignat}
On a un diagramme commutatif :
\begin{equation*}
\xymatrix @C=20mm {
C^i(A,l)\ar[d]_{g^{i,l}_t}\ar[r]^{\partial_f}  &C^{i+1}(A,l)\ar[d]_{g^{i+1,l}_t}
\\
C^i(A,l)\ar[r]^{\partial_{t^{-1}\cdot f}}&C^{i+1}(A,l)  }
\label{commute}
\end{equation*}
de sorte que $g_t^{\bullet,l}:(C^\bullet(A,l),\partial_f)\to(C^\bullet(A,l),\partial_{t^{-1}\cdot f})$ est un morphisme de complexes.
Par la proposition \ref{basechange}, notant $(e'(i))=(g_t^{i,l})^{-1}(e(i))$, on obtient :
\begin{alignat*}{2}
\det(C^\bullet(A,l),\partial_{t^{-1}\cdot f},e)&=\det(C^\bullet(A,l),\partial_f,e')\\
&=\det(C^\bullet(A,l),\partial_f,e)\prod_i\det(g^{i,l}_t)^{(-1)^i}.
\end{alignat*}
Par le th\'eor\`eme \ref{discri}, ceci se r\'e\'ecrit :
\begin{equation*}
 \Delta_A(t^{-1}\cdot f)^{(-1)^k}=\Delta_A(f)^{(-1)^k}\prod_i\det(g^{i,l}_t)^{(-1)^i}
\text{ pour }l\gg0.
\end{equation*}
La d\'efinition (\ref{defxi}) de $\Xi_A$ montre alors que :
\begin{equation*}
\Xi_A(t)=\prod_{i\geq0}\det(g^{i,l}_t)^{(-1)^{i+k}}\text{ pour }l\gg0.
\end{equation*}
Utilisant la d\'efinition (\ref{gilt}) de $g_t^{i,l}$, on calcule alors :
\begin{equation*}
\Xi_A(t)=\prod_{i\geq0}\prod_{u\in S_{i+l}}u(t)^{(-1)^{i+k}\dim_{\mathbb{Z}}\bigwedge^i(\tilde{\Gamma}(u)_{\mathbb{Z}})}\text{ pour }l\gg0,
\end{equation*}
ce qui se r\'e\'ecrit :
\begin{equation*}
\Xi_A=\sum_{i\geq0}\sum_{u\in S_{i+l}}(-1)^{i+k}\dim\bigwedge^i(\tilde{\Gamma}(u)_{\mathbb{R}})\cdot u\text{ pour }l\gg0.
\end{equation*}
On change alors l'ordre de sommation en regroupant les $u$ suivant la plus grande face de $\tilde{Q}$ \`a laquelle ils appartiennent. 
En prenant de plus (\ref{restr}) en compte, on obtient :
\begin{equation*}
\Xi_A=\sum_{\Gamma\subset Q}\sum_{i\geq0}(-1)^{i+k}\binom{\dim(\Gamma)+1}{i} \sum_{u\in\tilde{\Gamma}_{i+l}^0\cap \mathfrak{X}(\tilde{T})}u\text{ pour }l\gg0.
\end{equation*}
Appliquant alors les lemmes \ref{sommalt} et \ref{poly}, il vient :
\begin{alignat*}{2}
\Xi_A&=\sum_{\Gamma\subset Q}(-1)^{\dim(\Gamma)+1+k}(\dim(\Gamma)+1)\int_{\Gamma}ud\mu_{\Gamma}(u)\\
&=\sum_{\Gamma\subset Q}(-1)^{\codim(\Gamma)}(\dim(\Gamma)+1)\int_{\Gamma}ud\mu_{\Gamma}(u).
\end{alignat*}
\end{proof}

Le lemme ci-dessous est connu et est par exemple cons\'equence de \cite{BrionVergne} 4.5. Cependant, en l'absence de r\'ef\'erence o\`u il 
appara\^it sous une forme directement utilisable, j'en donne une preuve rapide utilisant la polynomialit\'e de la fonction d'Ehrhart 
(voir par exemple \cite{Ezra} 12.2).

\begin{lemme}\label{poly}
 Soit $\Gamma$ une face de $Q$. Alors, pour $l\geq0$,
la fonction $$l\mapsto\sum_{u\in\tilde{\Gamma}_{l}^0\cap \mathfrak{X}(\tilde{T})}u$$ est
polynomiale de terme dominant $\frac{l^{\dim(\Gamma)+1}}{\dim(\Gamma)!}\int_{\Gamma}ud\mu_{\Gamma}(u)$.
\end{lemme}

\begin{proof}[$\mathbf{Preuve}$]

Montrons d'abord qu'il s'agit d'un polyn\^ome de degr\'e $\leq\dim(\Gamma)+1$ pour $l\geq0$. 
En raisonnant par r\'ecurrence sur la dimension de $\Gamma$ et en appliquant une formule d'inclusion-exclusion pour 
les faces de $Q$ incluses dans $\Gamma$, on voit qu'il suffit de montrer cette propri\'et\'e pour la fonction 
$l\mapsto\sum_{u\in\tilde{\Gamma}_{l}\cap \mathfrak{X}(\tilde{T})}u$.

Il faut montrer que pour toute forme lin\'eaire enti\`ere $\psi$ sur $\mathfrak{X}(\tilde{T})$, 
l'application $P_{\psi}(l)=\sum_{u\in\tilde{\Gamma}_{l}\cap \mathfrak{X}(\tilde{T})}\psi(u)$ est polynomiale pour 
$l\geq0$. Si $\psi=h$, cette fonction est $P_{h}(l)=\sum_{u\in\tilde{\Gamma}_{l}\cap \mathfrak{X}(\tilde{T})}l=l\Card(\tilde{\Gamma}_{l}\cap \mathfrak{X}(\tilde{T}))$, 
soit $l$ fois la fonction d'Erhart de $\Gamma$, et est donc polynomiale en $l$ de degr\'e $\leq\dim(\Gamma)+1$. 

Autrement, quitte \`a ajouter \`a $\psi$ un multiple de $h$, on peut supposer $\psi$ positive sur $Q$. 
La fonction $P_\psi$ est alors la fonction d'Ehrhart du polytope auxilliaire 
$\{(x,y)\in\mathbb{R}\oplus \mathfrak{X}_{1,\mathbb{R}}|y\in Q, 0\leq x\leq\psi(y)\}$, et est donc polynomiale en $l$ de degr\'e $\leq\dim(\Gamma)+1$.

Il reste \`a calculer le coefficient dominant. C'est :
\begin{alignat*}{2}
\lim_{l\to\infty}\frac{1}{l^{\dim(\Gamma)+1}}\sum_{u\in\tilde{\Gamma}_{l}^0\cap \mathfrak{X}(\tilde{T})}u&=
\lim_{l\to\infty}\frac{1}{l^{\dim(\Gamma)}}\sum_{u\in\tilde{\Gamma}_{l}^0\cap \frac{1}{l}\mathfrak{X}(\tilde{T})}u \\
&=\frac{1}{\dim(\Gamma)!}\int_{\Gamma}ud\mu_{\Gamma}(u),
\end{alignat*}
o\`u l'on a identifi\'e dans la derni\`ere \'egalit\'e une int\'egrale et une limite de sommes de Riemann en prenant en compte la normalisation que nous 
avons avons choisie : la mesure du cube unit\'e est $\dim(\Gamma)!$ .
\end{proof}

Le lemme suivant est facile et classique :

\begin{lemme}\label{sommalt}
 Soit $N\geq0$ et $P$ un polyn\^ome de degr\'e $N$ et de coefficient dominant $a_N$. 
Alors $$\sum_{i=0}^{N}(-1)^i\binom{N}{i}P(X+i)=(-1)^NN!a_N.$$
\end{lemme}

\begin{proof}[$\mathbf{Preuve}$]
Soit $\Phi:P(X)\mapsto P(X+1)$ l'endomorphisme de l'anneau des polyn\^omes.
Par la formule du bin\^ome, $(\Id-\Phi)^N(P)=\sum_{i=0}^{N}(-1)^i\binom{N}{i}P(X+i)$. C'est alors un calcul imm\'ediat de v\'erifier que $(\Id-\Phi)$ fait 
baisser le degr\'e d'un polyn\^ome de $1$ et multiplie son coefficient dominant par l'oppos\'e de son degr\'e.
\end{proof}

\section{Degr\'es d'homog\'en\'eit\'e du discriminant}\label{part2}

Dans toute cette partie, $K$ est encore suppos\'e alg\'ebriquement clos de caract\'eristique $0$. 
On applique le th\'eor\`eme \ref{char} pour d\'emontrer le th\'eor\`eme principal \ref{princ} sous cette hypoth\`ese.

\subsection{Interpr\'etation torique du lieu discriminant}\label{intertor}

On commence par expliquer pourquoi notre probl\`eme s'inscrit dans le cadre g\'en\'eral d\'ecrit ci-dessus des vari\'et\'es duales de vari\'et\'es toriques. 

Consid\'erons le groupe ab\'elien libre de rang $c+N+1$ engendr\'e par $(Y_i)_{1\leq i\leq c}$ et $(X_j)_{0\leq j\leq N}$. On note $\alpha_i$ et $\beta_j$ les 
applications coordonn\'ees suivant $Y_i$ et $X_j$. Soit $\mathfrak{X}(\tilde{T})$ 
le sous-r\'eseau de rang $c+N$ d\'efini par l'\'equation $\sum_id_i\alpha_i=\sum_j\beta_j$. 
On note $h=\sum_i \alpha_i: \mathfrak{X}(\tilde{T})\to\mathbb{Z}$, $\mathfrak{X}(T)$ son noyau qui est un groupe 
ab\'elien libre de rang $k=c+N-1$, et $\mathfrak{X}_1=h^{-1}(1)$. 
On a par dualit\'e les morphismes de tores suivants :
\begin{equation}\label{apptor}
(K^*)^{(c+N+1)}\to\tilde{T}\to T.
\end{equation}

On introduit l'ensemble $A=\{Y_iX_0^{e_0}\ldots X_N^{e_N}\}_{1\leq i\leq c, e_j\geq0, \sum e_j=d_i}$ de $\mathfrak{X}_1$, qui l'engendre comme espace affine. 
L'espace vectoriel $K^A$ s'identifie naturellement \`a $V=\bigoplus_{1\leq i\leq c}H^0(\mathbb{P}^N,\mathcal{O}(d_i))$.

\begin{prop}\label{descridual}
On a l'\'egalit\'e suivante entre ferm\'es de $V=K^A$ : $$D=\tilde{X}^{\vee}_A.$$
\end{prop}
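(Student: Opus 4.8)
The plan is to identify the two closed subsets of $V = K^A$ by describing them both via the same incidence condition on the torus and its orbit closures. First I would recall the concrete description of the toric variety $\tilde X_A \subset (K^A)^*$: writing a point of $(K^A)^*$ as a linear form $\ell$ on $K^A = \bigoplus_i H^0(\mathbb P^N,\mathcal O(d_i))$, the defining monomials $Y_i X_0^{e_0}\cdots X_N^{e_N}$ of $A$ show that the parametrising map $\tilde T \to (K^A)^*$ sends $(t; x_0,\dots,x_N)$ to the evaluation functional $(F_1,\dots,F_c)\mapsto \sum_i t_i F_i(x_0,\dots,x_N)$. Hence $\tilde X_A$ is the (affine cone over the) image of the Segre-type map $\mathbb P^{c-1}\times \mathbb P^N \to \mathbb P((K^A)^*)$, i.e.\ essentially the flag-type incidence variety $\{([t],[x]) : \text{the functional } (F_i)\mapsto \sum t_iF_i(x)\}$. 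This is a classical point-$\times$-point picture of the Veronese–Segre embedding underlying systems of equations of multidegree $(d_1,\dots,d_c)$.

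Next I would unwind the definition of the projective dual $X_A^\vee$ and of its affine cone $\tilde X_A^\vee$. A point $f = (F_1,\dots,F_c) \in K^A$ lies in $\tilde X_A^\vee$ precisely when the hyperplane $\{f = 0\}$ in $\mathbb P((K^A)^*)$ is tangent to $X_A$ at some smooth point, i.e.\ contains the embedded tangent space $\mathbb T_p X_A$ for some $p \in X_A^{\mathrm{sm}}$. Using the explicit Segre–Veronese parametrisation above, a point $p$ of $X_A$ corresponds to a pair $([t],[x])$, and the tangent space to $X_A$ at such a point is spanned by the partial derivatives of the parametrisation; concretely, the condition that $f$ annihilate $\mathbb T_p X_A$ translates, after contracting with the coordinates $t_i$ and $x_j$, into the system
\begin{equation*}
\sum_i t_i F_i(x) = 0, \qquad \sum_i t_i \frac{\partial F_i}{\partial X_j}(x) = 0 \ \ (0\le j\le N), \qquad \big(F_i(x)\big)_{1\le i\le c}\ \text{spanning only a line in the }t\text{-direction}.
\end{equation*}
After eliminating $t$, this says exactly: there is a point $x \in \mathbb P^N$ with $F_1(x)=\dots=F_c(x)=0$ at which the Jacobian matrix $\big(\partial F_i/\partial X_j\big)$ has rank $< c$ — which is precisely the condition that $\{F_1=\dots=F_c=0\}$ fail to be smooth of codimension $c$ at $x$, i.e.\ the defining condition for $D$. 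So set-theoretically $D = \tilde X_A^\vee$, and since both are equipped with their reduced structures the equality of closed subschemes follows.

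The main obstacle is the careful bookkeeping in the middle step: one must check that the conormal/tangency description of $X_A^\vee$ over \emph{smooth} points of $X_A$ still captures all of $D$, even though $D$ is built from singularities occurring anywhere on the intersection (including, a priori, at points lying over singular locus of $X_A$, or at pairs $([t],[x])$ with degenerate $t$). Here I would use that $X_A$ is in fact smooth in the relevant range (this is the hypothesis invoked for Theorem \ref{discri}, and $A$ consists of \emph{all} monomials of the given multidegrees, so $X_A$ is the smooth Segre–Veronese variety $\mathbb P^{c-1}\times\mathbb P^N$), so the dual variety is the closure of the tangency locus over the dense torus and no extra components or embedded structure intervene; and that the incidence correspondence $\{(f,p) : \mathbb T_pX_A \subset \{f=0\}\}$ is irreducible of the expected dimension, projecting onto $D$. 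Together with the generic-smoothness translation of the rank condition into the Jacobian criterion, this pins down $\tilde X_A^\vee = D$ exactly. I expect the verification that the Jacobian-rank locus matches the tangent-space-annihilation locus — including the correct handling of the Euler relation $\sum_j X_j \partial F_i/\partial X_j = d_i F_i$, which is why the equation $F_i(x)=0$ is subsumed once the Jacobian drops rank — to be the one spot demanding genuine care rather than routine unwinding.
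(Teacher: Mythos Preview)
Your strategy matches the paper's: translate tangency along the torus parametrisation into the Jacobian criterion, then close up. But the closure step has a real gap at exactly the point you flag as delicate. Your parametrisation only covers the dense torus of $X_A$, so the tangency computation characterises the set $U$ of hyperplanes tangent at a \emph{torus} point; unwinding gives that $(F_1,\dots,F_c)\in U$ iff there is a common zero $x$ with all $x_j\neq 0$ at which some relation $\sum y_i\,dF_i=0$ holds with all $y_i\neq 0$. Since the torus is dense in the smooth locus, $\overline{U}=\tilde X_A^\vee$, and clearly $U\subset D$; the issue is why $\overline{U}\supset D$. Your claim that ``the incidence correspondence $\{(f,p):\mathbb T_pX_A\subset\{f=0\}\}$ \dots\ projects onto $D$'' is circular: that correspondence projects onto $X_A^\vee$ by definition, and equality with $D$ is the assertion to be proved. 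The paper closes the gap via a \emph{different} incidence variety, namely $Z=\{(F_\bullet,P)\in V\times\mathbb P^N:\{F_\bullet=0\}\text{ not smooth of codim }c\text{ at }P\}$ (Lemme~\ref{irred}): its fibres over $\mathbb P^N$ are determinantal hence irreducible, so $Z$ and $D=p_1(Z)$ are irreducible, and then a general point of $D$ has a singularity with all $x_j\neq 0$ and (by a separate Bertini argument) with all $y_i\neq 0$, i.e.\ lies in $U$.

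Two smaller corrections. Your Euler-relation remark is off: differentiating the parametrisation in the $y_i$-direction already yields $F_i(x)=0$ directly, whereas Euler only gives $\sum_i d_i y_iF_i(x)=0$, which does not recover each $F_i(x)=0$ separately. And $X_A$ is not $\mathbb P^{c-1}\times\mathbb P^N$ when the $d_i$ differ: the scaling $(y,x)\mapsto(\lambda y,\mu x)$ multiplies $y_ix^e$ by $\lambda\mu^{d_i}$, which depends on $i$, so the map does not descend. The correct model is the projective bundle $\mathbb P\bigl(\bigoplus_i\mathcal O_{\mathbb P^N}(d_i)\bigr)$; the paper establishes its smoothness via the polytope criterion (Proposition~\ref{lisse}) rather than by any such explicit identification.
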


\begin{proof}[$\mathbf{Preuve}$]

Soit $H$ l'hyperplan de $(K^A)^*$ d'\'equation $f=\sum_i Y_iF_i=0$.
\begin{alignat}{7}
H\text{ est}&\text{ tangent \`a } \tilde{X}_A\text{ en un point de }\tilde{T}\label{dual}\\
\Leftrightarrow&\{f=0\}\text{ n'est pas un diviseur lisse de }\tilde{T}\nonumber\\
\Leftrightarrow&\{f=0\}\text{ n'est pas un diviseur lisse de }(K^*)^{(c+N+1)}\nonumber\\
\Leftrightarrow&\exists (y_1,\ldots,y_c,x_0,\ldots,x_N)\in(K^*)^{(c+N+1)} \text{ tels que }
\nonumber\\
&F_i(x_0,\ldots,x_N)=0\text{ et } \sum_i y_i\frac{\partial F_i}{\partial x_j}(x_0,\ldots,x_N)=0
\nonumber\\
\Leftrightarrow&\text{ les } F_i \text{ ont un z\'ero commun \`a coordonn\'ees non nulles en lequel leurs}\nonumber\\
&\text{ d\'eriv\'ees partielles v\'erifient une relation lin\'eaire \`a coefficients non nuls.}\label{singu}
\end{alignat}

Notons $U$ le sous-ensemble de $K^A$ constitu\'e de ces hyperplans. Par (\ref{dual}) et la  d\'efinition de la vari\'et\'e duale, 
son adh\'erence est $\tilde{X}^{\vee}_A$. Par (\ref{singu}) et le crit\`ere jacobien, $U\subset D$. Mieux : (\ref{singu}) et le lemme \ref{irred} $(i)$ et $(ii)$ 
montrent que $U$ est dense dans $D$.

Par cons\'equent, $D=\tilde{X}^{\vee}_A.$
\end{proof}

\begin{lemme}\label{irred}
La vari\'et\'e $D$ est irr\'eductible. De plus, si $(F_1,\ldots,F_c)$ est un point g\'en\'eral de $D$, les propri\'et\'es suivantes sont v\'erifi\'ees :
\begin{enumerate}[(i)]
 \item La vari\'et\'e $\{F_1=\ldots=F_c=0\}$ poss\`ede un point singulier \`a coordonn\'ees toutes non nulles.
 \item Si $I\subsetneq\{1,\ldots,c\}$, la vari\'et\'e $\{(F_i=0)_{i\in I}\}$ 
est lisse de codimension $|I|$.
\end{enumerate}
\end{lemme}

\begin{proof}[$\mathbf{Preuve}$]

Soit $Z\subset V\times\mathbb{P}^N$ le ferm\'e constitu\'e des $(F_1,\ldots,F_c,P)$ tels que la vari\'et\'e $\{F_1=\ldots=F_c=0\}$ ait 
un espace tangent de dimension $>c$ en $P$. On notera $p_1:Z\to V$ et $p_2:Z\to\mathbb{P}^N$ les deux projections. Comme $D=p_1(Z)$, 
pour montrer que $D$ est irr\'eductible, il suffit de montrer que $Z$ l'est. Pour cela, il suffit de montrer que 
pour tout $P\in \mathbb{P}^N$, $p_2^{-1}(P)$ est irr\'eductible. Utilisant l'homog\'en\'eit\'e sous $PGL_{N+1}$, 
il suffit de montrer que $p_2^{-1}([1:0:\ldots:0])$ est irr\'eductible. En \'ecrivant le crit\`ere jacobien en coordonn\'ees, 
on voit que c'est cons\'equence du fait classique que l'ensemble des matrices $(N+1)\times c$ de rang $<c$ est irr\'eductible.

Montrons $(i)$. L'ouvert $W\subset\mathbb{P}^N$ des points \`a coordonn\'ees toutes non nulles est dense. 
Son image r\'eciproque par le morphisme dominant $p_2$ est donc dense dans $Z$, et $p_1(p_2^{-1}(W))$ est dense dans $D=p_1(Z)$, ce qu'on voulait.

Montrons $(ii)$. Par Bertini, on choisit des $(F_i)_{i\in I}$ tels que $\{(F_i=0)_{i\in I}\}$ soit lisse de 
codimension $|I|$ dans $\mathbb{P}^N$, et on pose $F_i=0$ si $i\notin I$. Ceci montre qu'il existe $(F_1,\ldots,F_c)\in D$ 
tel que $\{(F_i=0)_{i\in I}\}$ soit lisse de codimension $|I|$. Comme $D$ est irr\'eductible, un point g\'en\'eral de $D$ v\'erifie cette propri\'et\'e.

\end{proof}

On d\'eduit imm\'ediatement de la proposition \ref{descridual} le corollaire suivant :

\begin{cor}\label{cridef}
On a $\codim_V(D)>1$ si et seulement si $X_A$ est d\'efective.
\end{cor}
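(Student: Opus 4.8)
The plan is to read this corollary directly off Proposition~\ref{descridual}, combined with the elementary fact that passing to an affine cone does not change codimension. By Proposition~\ref{descridual} one has the equality $D=\tilde{X}^{\vee}_A$ of closed subsets of $V=K^A$, and $\tilde{X}^{\vee}_A$ is by definition the affine cone over the projective dual variety $X^{\vee}_A\subset\mathbb{P}(K^A)$. So the whole question reduces to comparing $\codim_V(\tilde{X}^{\vee}_A)$ with $\codim_{\mathbb{P}(K^A)}(X^{\vee}_A)$, and then invoking the definition of defectivity.

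In detail, I would first note that $D$, hence $\tilde{X}^{\vee}_A$, is nonempty: for instance $0\in D$, since the zero locus of the zero polynomials is all of $\mathbb{P}^N$, which is not of codimension $c$. Thus $X^{\vee}_A$ is a (possibly empty) closed subset of $\mathbb{P}(K^A)$ whose affine cone is $\tilde{X}^{\vee}_A$. I would then use the standard fact that for a nonempty closed subset $Y\subset\mathbb{P}(K^A)$ its affine cone $\tilde{Y}\subset K^A$ satisfies $\dim\tilde{Y}=\dim Y+1$ (componentwise); since also $\dim K^A=\dim\mathbb{P}(K^A)+1$, this gives $\codim_{K^A}(\tilde{Y})=\codim_{\mathbb{P}(K^A)}(Y)$. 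Applying this with $Y=X^{\vee}_A$ and using $D=\tilde{X}^{\vee}_A$ yields $\codim_V(D)=\codim_{\mathbb{P}(K^A)}(X^{\vee}_A)$, and by the very definition of defectivity recalled just before Theorem~\ref{discri} the right-hand side exceeds $1$ exactly when $X_A$ is defective. The degenerate case $X^{\vee}_A=\emptyset$ is covered by the same conventions: it forces $\codim_V(D)$ to be infinite, hence $>1$, while $X_A$ is then defective by convention.

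There is essentially no obstacle here; all the mathematical content sits in Proposition~\ref{descridual}, and what remains is the dimension bookkeeping for affine cones, which is immediate. If one preferred to phrase it without mentioning cones at all, one could instead observe that $D$ is conical in $V$, so that $D$ is a hypersurface if and only if its projectivisation $\mathbb{P}(D)$ is a hypersurface in $\mathbb{P}(K^A)$, i.e.\ $\codim_V(D)=1$ if and only if $\codim_{\mathbb{P}(K^A)}(X^{\vee}_A)=1$; the cone formulation above is just a more symmetric way of saying the same thing.
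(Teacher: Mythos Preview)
Your proposal is correct and follows exactly the approach the paper intends: the paper simply states that the corollary is deduced ``imm\'ediatement'' from Proposition~\ref{descridual}, and you have spelled out the one-line dimension bookkeeping (affine cone versus projectivisation) that makes this immediate. The only quibble is a reference: the definition of defectivity is recalled in the Notations paragraph of \S\ref{part1}, not just before Theorem~\ref{discri}.
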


On peut de plus relier les degr\'es d'homog\'en\'eit\'e qu'on cherche \`a calculer au caract\`ere $\Xi_A$.

\begin{cor}\label{cardeg}
On a les relations suivantes :
 \begin{enumerate}[(i)]
  \item $\deg=h(\Xi_A)$.
  \item $\deg_i=\alpha_i(\Xi_A)$.
  \item $\deg_{var}=\beta_j(\Xi_A)$.
 \end{enumerate}
\end{cor}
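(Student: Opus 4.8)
The claim is purely a matter of unwinding definitions, once the key identification $D = \tilde{X}^{\vee}_A$ (Proposition~\ref{descridual}) is in hand. The discriminant $\Delta$ of the excerpt is an equation of $D$, and $\Delta_A$ is an equation of $\tilde{X}^{\vee}_A$; since these two closed subvarieties of $V = K^A$ coincide, their equations agree up to a nonzero scalar, so $\Delta$ and $\Delta_A$ have the same homogeneity behavior under any linear action preserving this hypersurface. The plan is therefore to compare the three group actions $\rho$, $\rho_i$, $\rho'$ defined on $V$ in~(\ref{action}) with the torus action of $\tilde{T}$ on $K^A$ defined just before~(\ref{defxi}), and to read off the character $\Xi_A$ evaluated on the corresponding cocharacters.

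First I would make the identification of cocharacters explicit. The lattice $\mathfrak{X}(\tilde{T})$ is the sublattice of $\mathbb{Z}^{c+N+1}$ (with basis $(Y_i), (X_j)$) cut out by $\sum_i d_i\alpha_i = \sum_j \beta_j$, and the dual maps $(K^*)^{c+N+1} \to \tilde{T} \to T$ of~(\ref{apptor}) let us push forward one-parameter subgroups of $(K^*)^{c+N+1}$ to $\tilde{T}$. The action $\tilde{t}\cdot\sum c_a\chi_a \mapsto \sum c_a \chi_a^{-1}(\tilde{t})\chi_a$ on $K^A$, transported through $K^A \simeq V$, scales the monomial basis vector $Y_i X_0^{e_0}\cdots X_N^{e_N}$ by the inverse of the value of the character $Y_i X_0^{e_0}\cdots X_N^{e_N}$ at $\tilde t$. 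Now I would identify:
\begin{itemize}
\item the action $\rho(\lambda)$, which scales \emph{every} $F_i$ by $\lambda$, i.e.\ scales every basis monomial $Y_i\cdots$ by $\lambda$, corresponds to the cocharacter of $(K^*)^{c+N+1}$ putting $\lambda^{-1}$ on every $Y_i$ and $1$ on every $X_j$; pushed to $\tilde T$ this is the cocharacter pairing with a character $\chi$ to give $h(\chi)=\sum_i\alpha_i(\chi)$;
\item the action $\rho_i(\lambda)$, which scales only $F_i$, corresponds to the cocharacter with $\lambda^{-1}$ on $Y_i$ and $1$ elsewhere, pairing with $\chi$ to give $\alpha_i(\chi)$;
\item the action $\rho'(\mathrm{diag}(\lambda_0,\ldots,\lambda_N))$ by a diagonal matrix, which sends $X_j \mapsto \lambda_j^{-1}X_j$ inside each $F_i$ hence scales $Y_i X_0^{e_0}\cdots X_N^{e_N}$ by $\prod_j \lambda_j^{-e_j}$, corresponds to the cocharacter with $1$ on every $Y_i$ and $\lambda_j$ on $X_j$ (matching signs after checking the $M^{-1}$ in~(\ref{action})), pairing with $\chi$ to give $\sum_j (\log_{\lambda}) \beta_j(\chi)$; specializing all $\lambda_j = \lambda$ so that $\det = \lambda^{N+1}$ isolates the combination $\sum_j \beta_j$, and by $\tilde T$-equivariance all the $\beta_j(\Xi_A)$ must in fact be equal (the discriminant being $GL_{N+1}$-equivariant, not just equivariant under the diagonal torus), which pins down each individual $\beta_j(\Xi_A) = \deg_{var}$.
\end{itemize}

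Then I would specialize the defining relation~(\ref{defxi}), $\Delta_A(\sum c_a\chi_a(t)\chi_a) = \Xi_A(t)\Delta_A(\sum c_a\chi_a)$, to each of these one-parameter subgroups, exactly as the excerpt already does for $h$ to get $\deg(\Delta_A) = h(\Xi_A)$. For the cocharacter corresponding to $\rho_i$, the identity becomes $\Delta(\rho_i(\lambda)\cdot f) = \lambda^{\alpha_i(\Xi_A)}\Delta(f)$ (after absorbing the sign into the convention for the cocharacter, matching $\lambda^{-\deg_i}$ in~(\ref{defdeg})), giving $\deg_i = \alpha_i(\Xi_A)$; similarly for $\rho$ and $\rho'$. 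The only genuine point requiring an argument, rather than bookkeeping, is part~(iii): that $\beta_j(\Xi_A)$ is independent of $j$ and equals $\deg_{var}$ — but this is immediate because $\Delta$ is $GL_{N+1}$-invariant up to the character $\det^{\deg_{var}}$ by~(\ref{defdeg}), and conjugating a diagonal cocharacter by a permutation matrix permutes the $\beta_j$, so all $\beta_j(\Xi_A)$ coincide, and then comparing with the $\det = \lambda^{N+1}$ specialization gives the common value. The main obstacle, such as it is, will be getting every sign and every inverse ($M^{-1}$ in $\rho'$, the $\chi^{-1}$ in the dual action, the $\lambda^{-\deg}$ convention) consistent; there is no conceptual difficulty once $D = \tilde X^{\vee}_A$ is granted.
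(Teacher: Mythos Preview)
Your proposal is correct and follows essentially the same route as the paper: identify $\Delta$ with $\Delta_A$ via Proposition~\ref{descridual}, recognize each of the actions $\rho$, $\rho_i$, $\rho'$ (restricted to a suitable one-parameter subgroup) as coming from a cocharacter of $\tilde T$ through the map~(\ref{apptor}), and then specialize the defining relation~(\ref{defxi}) to that cocharacter. The only cosmetic difference is in~(iii): the paper's ``analogous'' argument would take $M=\mathrm{diag}(1,\ldots,\lambda,\ldots,1)$ with $\lambda$ in the $j$-th slot (so $\det M=\lambda$) and read off $\deg_{var}=\beta_j(\Xi_A)$ directly, whereas you first specialize to scalar matrices and then invoke permutation symmetry---both are fine.
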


\begin{proof}[$\mathbf{Preuve}$]

Montrons $(ii)$, qui est la seule relation que nous utiliserons. Les autres se prouvent de mani\`ere analogue. On \'ecrit, pour $f=\sum c_a\chi_a\in K^A=V$ :
\begin{alignat*}{4}
   \lambda^{\deg_i}\Delta_A(f)&=(\rho_i(\lambda^{-1})\cdot\Delta_A)(f)&&\text{ par (\ref{defdeg})} \\
   &=\Delta_A(\rho_i(\lambda)\cdot f)&&  \\
   &=\Delta_A(\sum\lambda^{\alpha_i(\chi_a)}c_a\chi_a)&&\text{ par (\ref{action}) 
et la d\'efinition de }\alpha_i\\
    &=\lambda^{\alpha_i(\Xi_A)}\Delta_A(f)&&\text{ par (\ref{defxi}). }
\end{alignat*}
Finalement, il vient $\deg_i=\alpha_i(\Xi_A)$, ce qu'on voulait.

\end{proof}

\subsection{Le polytope $Q(c,N,(d_i)_{1\leq i\leq c})$}

Dans ce paragraphe, et dans ce paragraphe seulement, on prend temporairement des conventions l\'eg\`erement plus g\'en\'erales : 
on autorise les $d_i$ \`a \^etre des nombres r\'eels strictement positifs. On d\'efinit toujours $\mathfrak{X}_{1,\mathbb{R}}$ comme l'espace 
affine d'\'equations $\sum_i d_i\alpha_i=\sum_j\beta_j$ et $\sum_i \alpha_i=1$ dans $\mathbb{R}^{c+N+1}$. 
On pose $Q(c,N,(d_i)_{1\leq i\leq c})$ le polytope d'in\'equations $\{\alpha_i\geq0\}_{1\leq i\leq c}$ et 
$\{\beta_j\geq0\}_{0\leq j\leq N}$ dans $\mathfrak{X}_{1,\mathbb{R}}$. Il est de dimension $c+N-1$.

Si $I\subset\{1,\ldots,c\}$ et $J\subset\{0,\ldots N\}$ sont des parties non vides, le sous-ensemble
 $\Gamma_{I,J}$ de $Q$ d\'efini par les \'equations $\{\alpha_i=0\}_{i\notin I}$ et 
$\{\beta_j=0\}_{j\notin J}$ est une face de $Q$ isomorphe \`a $Q(|I|,|J|-1,(d_i)_{i\in I})$. 
De plus, toutes les faces de $Q$ sont de cette forme.

En particulier, quand les $d_i$ sont entiers, les sommets de $Q(c,N,(d_i)_{1\leq i\leq c})$ sont des \'el\'ements de $A$. 
Comme de plus $A\subset Q(c,N,(d_i)_{1\leq i\leq c})$, on voit que $Q=Q(c,N,(d_i)_{1\leq i\leq c})$. On en d\'eduit le r\'esultat suivant :
\begin{prop}\label{lisse}
 La vari\'et\'e torique $X_A$ est lisse.
\end{prop}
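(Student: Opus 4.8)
We must show that $X_A$ is smooth, where $A$ is the finite subset of $\mathfrak{X}_1$ consisting of the monomials $Y_iX_0^{e_0}\cdots X_N^{e_N}$ with $\sum e_j = d_i$, and where the previous paragraph has just established that the associated polytope $Q$ equals $Q(c,N,(d_i)_{1\le i\le c})$, whose faces are all of the form $\Gamma_{I,J}\cong Q(|I|,|J|-1,(d_i)_{i\in I})$, and whose vertices lie in $A$.

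**The plan.** I would use the standard combinatorial criterion for smoothness of a projective toric variety $X_A$ associated to a lattice polytope $Q$ with vertices in $A$: $X_A$ is smooth if and only if at each vertex $v$ of $Q$, the primitive integral generators of the edges of $Q$ emanating from $v$ form a basis of the ambient lattice $\mathfrak{X}(\tilde T)$ (equivalently a basis of the lattice in the affine span $\mathfrak{X}_1$, translated to the origin). See \cite{GKZ} Chap.~5, or any toric geometry reference. Since the paper already reduces to the case $d_1=\dots=d_c$ integers implicitly through the face description, and more importantly since all vertices of $Q$ are in $A$, the criterion is the natural tool. So the first step is: \emph{identify the vertices of $Q(c,N,(d_i))$ and, at each, the edges and their primitive generators}.

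**Carrying it out.** A vertex of $Q=Q(c,N,(d_i))$ corresponds to the face $\Gamma_{I,J}$ with $|I|=1$ and $|J|=1$, i.e.\ to a choice $I=\{i\}$, $J=\{j\}$: the corresponding vertex is the monomial $v_{i,j}=Y_iX_j^{d_i}$, with coordinates $\alpha_i=1$, $\alpha_{i'}=0$ for $i'\ne i$, $\beta_j=d_i$, $\beta_{j'}=0$ for $j'\ne j$. Fix such a vertex $v=v_{i,j}$. The edges of $Q$ through $v$ correspond to the one-dimensional faces $\Gamma_{I,J}$ containing $v$, i.e.\ those with $(\{i\},\{j\})\subset(I,J)$ and $\dim\Gamma_{I,J}=1$; by the face formula $\dim Q(|I|,|J|-1,\cdot)=|I|+|J|-2$, so we need $|I|+|J|=3$, giving two families: (a) $I=\{i\}$, $J=\{j,j'\}$ for $j'\ne j$, an edge of the segment $Q(1,1,d_i)$ joining $Y_iX_j^{d_i}$ to $Y_iX_{j'}^{d_i}$; and (b) $I=\{i,i'\}$, $J=\{j\}$ for $i'\ne i$, an edge of $Q(2,0,(d_i,d_{i'}))$ joining $Y_iX_j^{d_i}$ to $Y_{i'}X_j^{d_{i'}}$. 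There are $N$ edges of type (a) and $c-1$ of type (b), total $N+c-1=\dim Q$, as required.

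Now compute the primitive edge vectors at $v$, working in the lattice $\mathfrak{X}(\tilde T)$ cut out by $\sum_i d_i\alpha_i=\sum_j\beta_j$ inside $\mathbb{Z}^{c+N+1}$, and translating $v$ to the origin. For a type-(a) edge toward $Y_iX_{j'}^{d_i}$: the difference of the two monomials is $-d_i\beta_j+d_i\beta_{j'}$ in exponent-vector terms, whose primitive generator is $u^{(a)}_{j'}=\beta_{j'}-\beta_j$ (i.e.\ $X_{j'}/X_j$), since $\gcd$ of the nonzero entries is $1$. For a type-(b) edge toward $Y_{i'}X_j^{d_{i'}}$: the difference is $\alpha_{i'}-\alpha_i$ in the $Y$-coordinates together with $d_{i'}\beta_j - d_i\beta_j = (d_{i'}-d_i)\beta_j$; so the lattice vector is $u^{(b)}_{i'}=(\alpha_{i'}-\alpha_i)+(d_{i'}-d_i)\beta_j$, which one checks is primitive (its $\alpha_{i'}$-coordinate is $1$). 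Finally I would verify that the $N$ vectors $\{\beta_{j'}-\beta_j\}_{j'\ne j}$ together with the $c-1$ vectors $\{(\alpha_{i'}-\alpha_i)+(d_{i'}-d_i)\beta_j\}_{i'\ne i}$ form a $\mathbb{Z}$-basis of the rank-$(c+N-1)$ lattice $\mathfrak{X}(\tilde T)_0$ (the linear part, $\alpha$'s summing to $0$, $\sum d_i\alpha_i=\sum\beta_j$). This is a concrete unimodularity check: the $\beta$-vectors span the sublattice $\{\sum\beta_j\text{-coords}=0\}$-direction complement appropriately, and the $\alpha$-vectors complete the basis; the change-of-basis matrix is block-triangular with $\pm1$ on the relevant pivots, hence has determinant $\pm1$. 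Since this holds at every vertex $v_{i,j}$, $X_A$ is smooth.

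**Main obstacle.** The only delicate point is the final unimodularity verification: one must set up the lattice $\mathfrak{X}(\tilde T)$ carefully (it is \emph{not} all of $\mathbb{Z}^{c+N+1}$ but the hyperplane sublattice $\sum d_i\alpha_i=\sum\beta_j$, further intersected after translation with $\sum\alpha_i=0$), and confirm that the proposed $N+c-1$ edge vectors actually generate it over $\mathbb{Z}$ and not merely over $\mathbb{Q}$ or a finite-index subgroup. I expect this to go through cleanly because the edge vectors are visibly "monomial-like" ($X_{j'}/X_j$ and $Y_{i'}X_j^{d_{i'}-d_i}/Y_i$) and an explicit ordered basis makes the transition matrix triangular with unit diagonal, but it is the step that requires genuine care rather than being purely formal. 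An alternative, perhaps cleaner, route: prove smoothness by induction on $c+N$ using the face description $\Gamma_{I,J}\cong Q(|I|,|J|-1,(d_i)_{i\in I})$ — smoothness of a toric variety is equivalent to smoothness of the toric variety of each vertex figure, and the vertex figures are (up to the lattice structure) cones over the lower-dimensional polytopes $Q(1,N-1,\cdot)$ and $Q(c-1,N,\cdot)$ attached at a vertex — but this still reduces to the same local unimodularity computation, so I would simply do the direct check above.
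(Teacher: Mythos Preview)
Your approach is essentially the same as the paper's: the paper's proof consists of the single sentence that the explicit description of the faces of $Q$ just obtained allows one to verify easily the smoothness criterion \cite{Oda}~2.22~(iv), which is precisely the vertex--edge unimodularity criterion you invoke. Your proposal simply spells out in detail the verification the paper leaves to the reader, and your computation of the primitive edge vectors $\beta_{j'}-\beta_j$ and $(\alpha_{i'}-\alpha_i)+(d_{i'}-d_i)\beta_j$ and the ensuing unimodularity check are correct.
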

\begin{proof}[$\mathbf{Preuve}$]
La description explicite 
des faces de $Q$ obtenue ci-dessus permet de v\'erifier facilement le crit\`ere de lissit\'e \cite{Oda} 2.22 $(iv)$.
\end{proof}

Nous allons effectuer quelques calculs d'int\'egrales qui seront utiles par la suite. Pour les mener, nous aurons plusieurs fois besoin de la seconde partie 
du lemme ci-dessous :

\begin{lemme}\label{polyn}
Soit $P\in \mathbb{Z}[d_1,\ldots,d_c][X]$. On introduit : $$R=\sum_{l=1}^{c}\frac{P(d_l)}{\prod_{l'\neq l}(d_l-d_{l'})}.$$
\begin{enumerate}[(i)]
 \item $R\in \mathbb{Z}[d_1,\ldots,d_c]$.
 \item Si $P$ est de degr\'e $\leq c-2$ en $X$, $R=0$.
\end{enumerate}
\end{lemme}

\begin{proof}[$\mathbf{Preuve}$]
On introduit $R'=\sum_{l=1}^{c}\frac{P(X_l)}{\prod_{l'\neq l}(X_l-X_{l'})}\in \mathbb{Z}[d_1,\ldots,d_c](X_1,\ldots,X_c)$. 
Multipliant par $X_{l_1}-X_{l_2}$, puis sp\'ecialisant en $X_{l_1}=X_{l_2}$, on obtient $0$. Par cons\'equent, $R'\in\mathbb{Z}[d_1,\ldots,d_c][X_1,\ldots,X_c]$.

En faisant $X_l=d_l$, on montre que $R\in \mathbb{Z}[d_1,\ldots,d_c]$. 

Si de plus $P$ est de degr\'e $\leq c-2$ en $X$, $R'$ est un polyn\^ome de degr\'e $<0$ en les $X_i$, et est donc nul. En faisant $X_l=d_l$, cela implique $R=0$.
\end{proof}

Calculons tout d'abord le volume du polytope $Q(c,N,(d_i)_{1\leq i\leq c})$. On rappelle notre convention d'attribuer une mesure $1$ au simplexe unit\'e.

\begin{prop}\label{vol}

$$\mu(Q(c,N,(d_i)_{1\leq i\leq c}))=\sum_{l=1}^c\frac{d_l^{c+N-1}}{\prod_
{l'\neq l}(d_l-d_{l'})}.$$

\end{prop}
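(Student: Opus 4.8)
The plan is to compute the volume of $Q=Q(c,N,(d_i))$ by slicing it with the hyperplanes $\{\sum_i\alpha_i=1\}$ fixed and parametrizing by the $\alpha_i$. Concretely, $Q$ sits in $\mathfrak{X}_{1,\mathbb{R}}$ cut out by $\sum_i d_i\alpha_i=\sum_j\beta_j$, $\sum_i\alpha_i=1$, and the inequalities $\alpha_i\ge0$, $\beta_j\ge0$. Projecting onto the $\alpha=(\alpha_1,\dots,\alpha_c)$ coordinates, the fiber over a point of the simplex $\{\alpha_i\ge0,\sum\alpha_i=1\}$ is the set of $(\beta_0,\dots,\beta_N)$ with $\beta_j\ge0$ and $\sum_j\beta_j=\sum_i d_i\alpha_i$, i.e.\ a dilated $N$-simplex of size $s(\alpha):=\sum_i d_i\alpha_i$, whose (normalized) volume is $s(\alpha)^N$. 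So up to the normalization constant coming from our convention on $\mu_\Gamma$ (unit simplex has measure $1$, hence the standard cube has measure $(\dim)!$), one gets
\begin{equation*}
\mu(Q)=\binom{c+N-1}{N}\int_{\Sigma_{c-1}}\Bigl(\sum_{i=1}^c d_i\alpha_i\Bigr)^{N}\,d\alpha,
\end{equation*}
where $\Sigma_{c-1}$ is the standard $(c-1)$-simplex with its own normalization; I would double-check the binomial/normalization prefactor by testing the formula against the known cases $c=1$ (giving $d_1^{N}$, consistent with Boole) and all $d_i$ equal.

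The key computation is then the simplex integral $\int_{\Sigma_{c-1}}(\sum_i d_i\alpha_i)^{N}\,d\alpha$. The standard tool is the Dirichlet-type identity $\int_{\Sigma_{c-1}}(\sum_i d_i\alpha_i)^{m}\,d\alpha$, which one evaluates by expanding with the multinomial theorem and integrating monomials over the simplex, or more slickly via the generating-function / Laplace-transform trick: $\int_{\Sigma_{c-1}} e^{\sum_i d_i\alpha_i t}\,d\alpha$ can be computed in closed form and equals (up to normalization) a divided difference of the exponential, namely $\sum_{l} e^{d_l t}/\prod_{l'\ne l}(d_l-d_{l'})$ when the $d_i$ are distinct. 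Extracting the coefficient of $t^{c+N-1}$ (the appropriate power after accounting for the $(c-1)$-dimensional integration and the exponent $N$) yields exactly $\sum_{l=1}^c d_l^{c+N-1}/\prod_{l'\ne l}(d_l-d_{l'})$, the claimed answer. That this is a polynomial in the $d_i$ — so the formula makes sense even when some $d_i$ coincide, by continuity — is precisely Lemma \ref{polyn}(i).

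The main obstacle I anticipate is pinning down the normalization constant exactly right: the lattice on $\mathfrak{X}_{1,\mathbb{R}}$ is the one induced from $\mathfrak{X}(\tilde T)$, and the measure $\mu_\Gamma$ is normalized so the unit simplex has measure $1$, which differs from Lebesgue measure by $(\dim)!$; combined with the coordinate change from the ambient $(\alpha,\beta)$-coordinates to the chosen lattice basis of $\mathfrak{X}_1$, there is real bookkeeping to get the prefactor to collapse against the $\binom{c+N-1}{N}$ and the simplex-volume factors so that the final constant is exactly $1$. I would handle this either by carefully choosing an explicit lattice basis of $\mathfrak{X}(\tilde T)$ adapted to the slicing $\alpha\mapsto\beta$, or — cleaner — by verifying the normalization once on the single example $c=1$ (where $Q$ is literally the $N$-simplex scaled by $d_1$, so $\mu(Q)=d_1^{N}$) and invoking that both sides are polynomials in the $d_i$ of the same (multi-)degree to transfer the identity; the latter route sidesteps most of the constant-chasing. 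The rest is the divided-difference computation above, which is routine.
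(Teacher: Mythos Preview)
Your approach is correct and genuinely different from the paper's. The paper proceeds by induction on $c$: the base case $c=1$ is the volume of a scaled $N$-simplex, and for the inductive step one slices $Q$ by the single hyperplane $\{\alpha_c=s\}$, identifies this slice (via an explicit affine map of determinant $(1-s)^{c-2}$) with $Q(c-1,N,(sd_i+(1-s)d_c)_{i<c})$, applies the induction hypothesis, integrates over $s\in[0,1]$, and simplifies using Lemma~\ref{polyn}~(ii). Your route slices by \emph{all} the $\alpha$'s at once and reduces to the single integral $\int_{\Sigma_{c-1}}(\sum_i d_i\alpha_i)^N\,d\alpha$, which is precisely a Hermite--Genocchi/divided-difference computation; this avoids induction and makes the appearance of the divided-difference expression $\sum_l d_l^{c+N-1}/\prod_{l'\neq l}(d_l-d_{l'})$ completely transparent. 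The paper's argument is more elementary (no generating functions needed) and, importantly, it also feeds directly into Proposition~\ref{inte}, where the same slicing along $\alpha_c=s$ is reused to compute $\int_Q\alpha_c\,d\mu$; your approach would need a separate adaptation there. Your prefactor $\binom{c+N-1}{N}$ is in fact exactly right, because the lattice $\mathfrak{X}(T)$ surjects onto the integer lattice of the $\alpha$-hyperplane with kernel the integer lattice of the $\beta$-fiber, so the normalized-volume Fubini constant is $(c+N-1)!/\bigl(N!\,(c-1)!\bigr)$.

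One caveat: your proposed shortcut of pinning down the normalization by checking the case $c=1$ does not work as stated. The unknown constant could a priori depend on $c$ (and $N$), so verifying it at $c=1$ says nothing about $c\ge 2$; the argument ``both sides are polynomials in the $d_i$ of the same degree'' only lets you compare up to a scalar \emph{for fixed} $c$ and $N$. To fix this, either carry out the lattice argument sketched above (which is short), or, for each fixed $(c,N)$, check one convenient specialization of the $d_i$ (e.g.\ $d_i=i$) rather than dropping to $c=1$.
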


\begin{proof}[$\mathbf{Preuve}$]

On proc\`ede par r\'ecurrence sur $c$. Pour $c=1$, c'est la formule du volume du simplexe de c\^ot\'e $d_1$. 
Si $c\geq2$, on applique Fubini en remarquant que l'image de $Q(c-1,N,(s d_i+(1-s)d_c)_{1\leq i\leq c-1})$ 
par l'application $$(y_1,\ldots,y_c,x_0,\ldots,x_N)\mapsto((1-s)y_1,\ldots,(1-s)y_c,x_0,\ldots,x_N)$$ 
est $Q(c,N,(d_i)_{1\leq i\leq c})\cap\{\alpha_c=s\}$. 
Entre les espaces affines qui nous int\'eressent cette application est de d\'eterminant $(1-s)^{c-2}$. 
On peut alors appliquer l'hypoth\`ese de r\'ecurrence. Il vient :
\begin{alignat*}{3}
  \mu&(Q(c,N,(d_i)_{1\leq i\leq c}))\\
&=(c+N-1)\int_0^1(1-s)^{c-2}\mu(Q(c-1,N,(s d_i+(1-s)d_c)_{1\leq i\leq c-1}))ds\\
   &=\sum_{l=1}^{c-1}\int_0^1\frac{((1-s)d_c+s d_l)^{c+N-2}}{\prod_
{l'\neq l,c}(d_l-d_{l'})}ds\\
 &=\sum_{l=1}^{c}\frac{1}{\prod_
{l'\neq l}(d_l-d_{l'})}(d_l^{c+N-1}-d_c^{c+N-1})\\
&=\sum_{l=1}^{c}\frac{d_l^{c+N-1}}{\prod_
{l'\neq l}(d_l-d_{l'})}\text{       par le lemme \ref{polyn} $(ii)$. }
\end{alignat*}
\end{proof}

Enfin, nous utiliserons dans le paragraphe suivant le calcul de l'int\'egrale ci-dessous :

\begin{prop}\label{inte}

$$\int_{Q(c,N,(d_i)_{1\leq i\leq c})}\alpha_c(u)d\mu(u)=\frac{1}{(c+N)}\sum_{l=1}^{c}\frac{1}{\prod_
{l'\neq l}(d_l-d_{l'})}\left(\frac{d_c^{c+N}-d_l^{c+N}}{d_c-d_l}\right).$$

\end{prop}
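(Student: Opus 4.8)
The plan is to compute $\int_{Q(c,N,(d_i))} \alpha_c(u)\,d\mu(u)$ by the same inductive slicing argument used in the proof of Proposition~\ref{vol}, integrating first over the slices $\{\alpha_c = s\}$. Concretely, I would write
\begin{equation*}
\int_{Q(c,N,(d_i))}\alpha_c\,d\mu = (c+N-1)\int_0^1 s\,(1-s)^{c-2}\,\mu\bigl(Q(c-1,N,(sd_i+(1-s)d_c)_{1\le i\le c-1})\bigr)\,ds,
\end{equation*}
using exactly the same identification of $Q(c,N,(d_i))\cap\{\alpha_c=s\}$ with a rescaled copy of $Q(c-1,N,(sd_i+(1-s)d_c))$ of Jacobian $(1-s)^{c-2}$, and noting that $\alpha_c$ is constant equal to $s$ on that slice. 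Then I would substitute the closed formula from Proposition~\ref{vol} for the volume of the $(c-1)$-dimensional polytope, obtaining a sum over $l=1,\dots,c-1$ of integrals $\int_0^1 s(1-s)^{c-2}\,\frac{((1-s)d_c+sd_l)^{c+N-2}}{\prod_{l'\neq l,c}(d_l-d_{l'})}\,ds$.

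The key step is then to evaluate each elementary integral $\int_0^1 s(1-s)^{c-2}((1-s)d_c+sd_l)^{c+N-2}\,ds$. I would do the affine change of variable $t = (1-s)d_c + sd_l$, so that $s = (t-d_c)/(d_l-d_c)$, $1-s = (d_l - t)/(d_l - d_c)$, and $ds = dt/(d_l-d_c)$; this turns the integrand into a polynomial in $t$ over the interval $[d_c,d_l]$, and the whole integral becomes a rational expression in $d_l$ and $d_c$ whose denominator contributes an extra factor $(d_l-d_c)$. After bringing in the denominator $\prod_{l'\neq l,c}(d_l-d_{l'})$ and the factor $d_l - d_c$, the summand over $l<c$ acquires the denominator $\prod_{l'\neq l}(d_l-d_{l'})$, which is precisely the shape appearing in the claimed formula.

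At that point the task is purely algebraic: one must recognize that the resulting polynomial numerators, after expansion, can be rewritten as $\tfrac{1}{c+N}\bigl(d_c^{c+N}-d_l^{c+N}\bigr)/(d_c-d_l)$ modulo terms that sum to zero over $l$. Here I would lean on Lemma~\ref{polyn}$(ii)$: any polynomial in $d_l$ of degree $\le c-2$ contributes $0$ to a sum of the form $\sum_{l=1}^c P(d_l)/\prod_{l'\neq l}(d_l-d_{l'})$, so I am free to adjust the numerator by low-degree terms and to symmetrize the index range from $\{1,\dots,c-1\}$ to $\{1,\dots,c\}$ (the $l=c$ term being, after the manipulations, one of these negligible or self-cancelling contributions, using Remark~\ref{remconv} to interpret the $l=c$ summand $\frac{d_c^{c+N}-d_c^{c+N}}{d_c-d_c}$ as $(c+N)d_c^{c+N-1}$). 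Matching the $1/(c+N)$ normalization comes from the $\tfrac{1}{c+N-1}\cdot\tfrac{c+N-1}{c+N}$ bookkeeping in the Beta-type integrals.

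The main obstacle I anticipate is the bookkeeping in this last algebraic reduction: keeping track of which auxiliary polynomial terms are genuinely zero (via Lemma~\ref{polyn}$(ii)$, after checking their degree in $d_l$ is $\le c-2$) versus which ones must be retained, and correctly handling the degenerate $l=c$ term under the convention of Remark~\ref{remconv}. The integral evaluation itself and the change of variables are routine; the delicate point is organizing the symmetrization so that the answer comes out in the clean symmetric form stated, rather than an asymmetric expression privileging the index $c$. Once the identity $\sum_{l<c}(\text{stuff}_l) = \sum_{l\le c}\tfrac{1}{c+N}\tfrac{d_c^{c+N}-d_l^{c+N}}{(d_c-d_l)\prod_{l'\neq l}(d_l-d_{l'})}$ is established by this low-degree-correction bookkeeping, the proposition follows.
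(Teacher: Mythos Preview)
Your plan is essentially the paper's proof: slice at $\{\alpha_c=s\}$, plug in Proposition~\ref{vol} for the slice volume, reduce to one-variable integrals, and symmetrize the sum from $l<c$ to $l\le c$ using Lemma~\ref{polyn}$(ii)$ together with the convention of Remark~\ref{remconv} for the $l=c$ term.

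Two small points where you diverge. First, when you substitute Proposition~\ref{vol} you should notice that $\prod_{l'\neq l}(\tilde d_l-\tilde d_{l'})=s^{c-2}\prod_{l'\neq l,c}(d_l-d_{l'})$, so the $s^{c-2}$ in the denominator cancels the Jacobian factor from the slicing; the integrand that actually remains is just $s\cdot((1-s)d_c+sd_l)^{c+N-2}$, not the one with the extra $(1-s)^{c-2}$ you wrote. Second, for this integral the paper uses an integration by parts (differentiate $s$, integrate the power of $(1-s)d_c+sd_l$) rather than your affine substitution; this produces directly
\[
\sum_{l=1}^{c-1}\Bigl[\int_0^1\frac{((1-s)d_c+sd_l)^{c+N-1}}{\prod_{l'\neq l}(d_l-d_{l'})}\,ds-\frac{d_c^{c+N-1}}{\prod_{l'\neq l}(d_l-d_{l'})}\Bigr],
\]
where the second term is summed explicitly by Lemma~\ref{polyn}$(ii)$ (the numerator being constant in $d_l$) and supplies exactly the missing $l=c$ contribution. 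Your change of variable would work too, with a bit more algebra.
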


\begin{proof}[$\mathbf{Preuve}$]

On applique Fubini comme dans le calcul pr\'ec\'edent.
\begin{alignat*}{3}
  \int&_{Q(c,N,(d_i)_{1\leq i\leq c})}\alpha_c(u)d\mu(u)\\
&=(c+N-1)\int_0^1s(1-s)^{c-2}\mu(Q(c-1,N,(s d_i+(1-s)d_c)_{1\leq i\leq c-1}))ds\\
   &=(c+N-1)\sum_{l=1}^{c-1}\int_0^1s\frac{((1-s)d_c+s d_l)^{c+N-2}}{\prod_
{l'\neq l,c}(d_l-d_{l'})}ds\\
 &=\sum_{l=1}^{c-1}\left[\int_0^1\frac{((1-s)d_c+sd_l)^{c+N-1}}{\prod_
{l'\neq l}(d_l-d_{l'})}ds-\frac{d_c^{c+N-1}}{\prod_
{l'\neq l}(d_l-d_{l'})}\right],
\end{alignat*}
o\`u l'on a int\'egr\'e par parties. Calculant l'int\'egrale du terme de gauche, et appliquant le lemme \ref{polyn} $(ii)$ pour sommer le terme de droite, on obtient :
\begin{alignat*}{3}
 \int&_{Q(c,N,(d_i)_{1\leq i\leq c})}\alpha_c(u)d\mu(u)\\
 &=\frac{1}{(c+N)}\left[\sum_{l=1}^{c-1}\frac{1}{\prod_
{l'\neq l}(d_l-d_{l'})}\left(\frac{d_c^{c+N}-d_l^{c+N}}{d_c-d_l}\right)+\frac{(c+N)d_c^{c+N-1}}{\prod_
{l'\neq c}(d_c-d_{l'})}\right]\\
&=\frac{1}{(c+N)}\sum_{l=1}^{c}\frac{1}{\prod_
{l'\neq l}(d_l-d_{l'})}\left(\frac{d_c^{c+N}-d_l^{c+N}}{d_c-d_l}\right).
\end{alignat*}

\end{proof}

\subsection{Homog\'en\'eit\'e en les \'equations}

Montrons la premi\`ere partie du th\'eor\`eme \ref{princ}. Par sym\'etrie, on peut supposer $i=c$.

\begin{prop} \label{premprinc}
   $$\deg_c =d_1\ldots d_{c-1}\sum_{l=1}^{c}\frac{1}{\prod_
{l'\neq l}(e_l-e_{l'})}\left(\frac{e_c^{N+1}-e_l^{N+1}}{e_c-e_l}\right).$$

\end{prop}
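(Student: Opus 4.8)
The idea is to combine Corollary~\ref{cardeg}~$(ii)$, which gives $\deg_c=\alpha_c(\Xi_A)$, with the formula of Theorem~\ref{char} for $\Xi_A$, and then to recognize that most of the resulting sum over faces of $Q$ vanishes. Concretely, applying $\alpha_c$ to the identity of Theorem~\ref{char} yields
\begin{equation*}
\deg_c=\sum_{\Gamma\subset Q}(-1)^{\codim(\Gamma)}(\dim(\Gamma)+1)\int_{\Gamma}\alpha_c(u)\,d\mu_{\Gamma}(u).
\end{equation*}
By the description of the faces of $Q$ given in Section~2.2, every face is of the form $\Gamma_{I,J}$ with $I\subset\{1,\dots,c\}$, $J\subset\{0,\dots,N\}$ nonempty, and $\Gamma_{I,J}\cong Q(|I|,|J|-1,(d_i)_{i\in I})$. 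The linear form $\alpha_c$ restricts to zero on $\Gamma_{I,J}$ whenever $c\notin I$, so only the faces with $c\in I$ contribute.

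**Reducing the sum over faces.** First I would fix $I$ (with $c\in I$) and sum over all $J\subset\{0,\dots,N\}$ with $|J|=j+1$ fixed: on each such $\Gamma_{I,J}$ the integral $\int\alpha_c\,d\mu$ depends only on $|I|$, $j$ and $(d_i)_{i\in I}$ (by the translation-invariance under coordinate permutation of the $\beta$'s), so the $J$-sum just contributes a binomial factor $\binom{N+1}{j+1}$ together with a sign and a dimension factor $(|I|+j)$. Then I would sum over $j$; here I expect the alternating sum over $j$ of $\pm\binom{N+1}{j+1}(|I|+j)\int_{Q(|I|,j,(d_i)_{i\in I})}\alpha_c$ — where the integral is, by Proposition~\ref{inte}, a rational expression in the $d_i$, $i\in I$, of controlled degree in $j$ — to collapse dramatically via Lemma~\ref{sommalt}, leaving only the top-degree term. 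This should kill all faces except those with $J=\{0,\dots,N\}$, i.e. reduce the sum to one over subsets $I\ni c$ only. A parallel (easier) collapse over $I$, using Lemma~\ref{polyn}~$(ii)$ to discard the lower-order symmetric pieces, should then leave exactly the claimed single sum over $l\in\{1,\dots,c\}$ with the Lagrange-interpolation denominators $\prod_{l'\neq l}(d_l-d_{l'})$ and numerators $\frac{e_c^{N+1}-e_l^{N+1}}{e_c-e_l}$; the overall prefactor $d_1\cdots d_{c-1}$ should emerge as the product of the "missing" variables, mirroring how $d_1\cdots\hat d_i\cdots d_c$ appears in~(\ref{degi}).

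**The main obstacle.** The delicate point is the bookkeeping of signs and of the shift between $e_i=d_i-1$ and $d_i$: Proposition~\ref{inte} and Proposition~\ref{vol} are stated in the $d_i$ variables and in terms of $Q(c,N,(d_i))$, whereas the target formula is in the $e_i$ variables. I would handle this by first proving a clean intermediate formula for $\deg_c$ as an alternating sum of integrals $\int_{Q(|I|,N,(d_i)_{i\in I})}\alpha_c\,d\mu$ over $I\ni c$ (this is where Lemma~\ref{sommalt} does the work of removing the $J$-faces), and only afterwards substituting the closed form of Proposition~\ref{inte} and carrying out the final simplification. The combinatorial identity needed at the very end — that the double alternating sum telescopes to the advertised Lagrange-type expression — is the step most likely to require care; I expect it to follow from Lemma~\ref{polyn}~$(ii)$ applied to a polynomial in a single $d_l$ of degree at most $|I|-2$, exactly as in the proofs of Propositions~\ref{vol} and~\ref{inte}, together with a finite-difference computation in $N$. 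Assuming these lemmas, no genuinely new ingredient is needed: the proof is a (careful) evaluation of the general formula of Theorem~\ref{char} in the toric situation set up in Section~3.1.
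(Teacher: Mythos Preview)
Your overall strategy is exactly that of the paper: start from $\deg_c=\alpha_c(\Xi_A)$ via Corollary~\ref{cardeg}~$(ii)$, plug in Theorem~\ref{char}, use the face description $\Gamma_{I,J}$ to discard the faces with $c\notin I$, and then insert the closed form of Proposition~\ref{inte}. From this point the paper obtains
\[
\deg_c=\sum_{\substack{I\subset\{1,\dots,c-1\}\\ \varnothing\neq J\subset\{0,\dots,N\}}}\sum_{l\in I\cup\{c\}}\frac{(-1)^{c+N-|I|-|J|}}{\prod_{l'\in I\cup\{c\},\,l'\neq l}(d_l-d_{l'})}\left(\frac{d_c^{|I|+|J|}-d_l^{|I|+|J|}}{d_c-d_l}\right),
\]
and you are right that the next two moves are ``collapse in $J$'' then ``collapse in $I$''.

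However, your proposed mechanism for the $J$-collapse is off. The summand, after cancelling the factor $(\dim\Gamma+1)$ against the $\frac{1}{|I|+|J|}$ in Proposition~\ref{inte}, contains $d_l^{|I|+|J|}$ and is therefore \emph{exponential} in $j=|J|$, not polynomial; Lemma~\ref{sommalt} does not apply. What the paper does instead is add back the harmless term $j=0$ (zero by Lemma~\ref{polyn}~$(ii)$) and apply the ordinary binomial formula $\sum_{j=0}^{N+1}(-1)^{N+1-j}\binom{N+1}{j}d^{j}=(d-1)^{N+1}$, which is precisely what converts $d_l^{|I|+j}$ into $d_l^{|I|}e_l^{N+1}$. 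So the $J$-sum does \emph{not} single out the top face $J=\{0,\dots,N\}$; it is what manufactures the $e$-variables.

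The $I$-collapse is also heavier than you anticipate. After the binomial step one has, for each $l$, a sum over $I\subset\{1,\dots,c-1\}$ of $(-1)^{c-1-|I|}\frac{d_c^{|I|}e_c^{N+1}-d_l^{|I|}e_l^{N+1}}{d_c-d_l}$ times Lagrange denominators supported on $I\cup\{c\}$. The paper swaps the sums over $I$ and $l$, isolates a quantity $M_l$, expands the products $\prod_{l'\in\bar I}(d_l-d_{l'})$, and applies the binomial formula a \emph{second} time (now in the $I$-size) to factor $M_l=e_c^{N+1}\prod_{l'=1}^{c-1}(d_{l'}+d_c-d_l)-e_l^{N+1}\prod_{l'=1}^{c-1}d_{l'}$. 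Only after this does a single use of Lemma~\ref{polyn}~$(ii)$ (on a degree $c-2$ polynomial in $d_l$) replace the first product by $\prod_{l'=1}^{c-1}d_{l'}$ and yield the claimed formula. None of this is deep, but it is not a one-line application of Lemma~\ref{polyn}~$(ii)$ either; if you try to carry out your plan as written you will get stuck at the $J$-step and underestimate the $I$-step.
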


\begin{proof}[$\mathbf{Preuve}$]

On utilise la relation \ref{cardeg} $(ii)$, et la formule \ref{char} pour $\Xi_A$ qui s'applique car $X_A$ est lisse par la proposition \ref{lisse} :
$$\deg_c=\sum_{\Gamma\subset Q(c,N,(d_i)_{1\leq i\leq c})}(-1)^{\codim(\Gamma)}(\dim(\Gamma)+1)\int_{\Gamma}\alpha_c(u)d\mu_{\Gamma}(u).$$
Les faces de $Q(c,N,(d_i)_{1\leq i\leq c})$ sont les $\Gamma_{I,J}$. L'int\'egrale qui intervient est nulle si $c\notin I$ car $\alpha_c$ 
s'annule alors identiquement sur $\Gamma_{I,J}$. Si $c\in I$, on reconna\^it l'int\'egrale calcul\'ee en \ref{inte}. Il vient :
$$\deg_c=\sum\limits_{\substack{I\subset\{1,\ldots,c-1\}\\ \varnothing\neq J\subset\{0,\ldots,N\}}}\sum_{l\in I\cup\{c\}}\frac{(-1)^{c+N-|I|-|J|}}{\prod\limits_{\substack{l'\in I\cup\{c\}\\l'\neq l}}
(d_l-d_{l'})}\left(\frac{d_c^{|I|+|J|}-d_l^{|I|+|J|}}{d_c-d_l}\right).$$
En param\'etrant $J$ par $j=|J|$, et en remarquant que le terme $j=0$ dans la somme ci-dessous est nul par le lemme \ref{polyn} $(ii)$, on obtient :
$$\deg_c=\sum_{j=0}^{N+1}\tbinom{N+1}{j}\sum_{I\subset\{1,\ldots,c-1\}}\sum_{l\in I\cup\{c\}}\frac{(-1)^{c+N-|I|-j}}{\prod\limits_{\substack{l'\in I\cup\{c\}\\l'\neq l}}
(d_l-d_{l'})}\left(\frac{d_c^{|I|+j}-d_l^{|I|+j}}{d_c-d_l}\right).$$
Appliquons la formule du bin\^ome.
$$\deg_c=\sum_{I\subset\{1,\ldots,c-1\}}\sum_{l\in I\cup\{c\}}\frac{(-1)^{c-|I|-1}}{\prod\limits_{\substack{l'\in I\cup\{c\}\\l'\neq l}}
(d_l-d_{l'})}\left(\frac{d_c^{|I|}e_c^{N+1}-d_l^{|I|}e_l^{N+1}}{d_c-d_l}\right).$$
\'Echangeons alors les sommations sur $I$ et sur $l$. On note $\bar{I}$ le compl\'ementaire de $I$ dans $\{1,\ldots,c-1\}$, 
et on remarque que dans la somme ci-dessous la contribution des termes pour lesquels $l\in\bar{I}$ est nulle.
$$\deg_c=\sum_{l=1}^c\frac{1}{\prod_
{l'\neq l}(d_l-d_{l'})}\frac{1}{d_c-d_l}M_l, \text{ o\`u}$$
$$M_l=\sum_{\bar{I}\subset\{1,\ldots,c-1\}}(-1)^{|\bar{I}|}
(d_c^{c-1-|\bar{I}|}e_c^{N+1}-d_l^{c-1-|\bar{I}|}e_l^{N+1})\prod_{l'\in\bar{I}}(d_l-d_{l'}).$$
Calculons $M_l$. On commence par d\'evelopper le produit pour obtenir :
$$M_l=\sum\limits_{\substack{\bar{I}\subset\{1,\ldots,c-1\}\\H\subset\bar{I}}}
(d_c^{c-1-|\bar{I}|}e_c^{N+1}-d_l^{c-1-|\bar{I}|}e_l^{N+1})(-1)^{|H|+|\bar{I}|}d_l^{|\bar{I}|-|H|}\prod_{l'\in H}d_{l'}.$$
En sommant d'abord sur $H$, puis sur le cardinal $i=|\bar{I}|-|H|$, on obtient pour $M_l$ l'expression suivante :
$$\sum_{H\subset\{1,\ldots,c-1\}}\prod_{l'\in H}d_{l'}\sum_{i=0}^{c-|H|-1}\tbinom{c-|H|-1}{i}
(-d_l)^i(d_c^{^{c-1-i-|H|}}e_c^{N+1}-d_l^{^{c-1-i-|H|}}e_l^{N+1}).$$
Appliquons \`a nouveau la formule du bin\^ome.
$$M_l=\sum_{H\subset\{1,\ldots,c-1\}}\prod_{l'\in H}d_{l'}\left((d_c-d_l)^{c-1-|H|}e_c^{N+1}-(d_l-d_l)^{c-1-|H|}e_l^{N+1}\right).$$
Le terme de droite se calcule en remarquant que $(d_l-d_l)^{c-1-|H|}$ est non nul seulement si $|H|=c-1$, c'est-\`a-dire si $H=\{1,\ldots,c-1\}$. 
Quant au terme de gauche, on peut le factoriser ais\'ement. Il reste :
$$M_l=e_c^{N+1}\prod_{l'=1}^{c-1}(d_{l'}+d_c-d_l)-e_l^{N+1}\prod_{l'=1}^{c-1}d_{l'}.$$
Reprenant le calcul de $\deg_c$, on voit que :
$$\deg_c=\sum_{l=1}^c\frac{1}{\prod_
{l'\neq l}(d_l-d_{l'})}\left(\frac{e_c^{N+1}\prod_{l'=1}^{c-1}(d_{l'}+d_c-d_l)-e_l^{N+1}\prod_{l'=1}^{c-1}d_{l'} }{d_c-d_l}\right).$$
Par le lemme \ref{polyn} $(ii)$, comme $\frac{e_c^{N+1}\prod_{l'=1}^{c-1}(d_{l'}+d_c-d_l)-e_c^{N+1}\prod_{l'=1}^{c-1}d_{l'}}{d_c-d_l}$ 
est un polyn\^ome de degr\'e $c-2$ en $d_l$, on calcule pour conclure :
\begin{alignat*}{2}
 \deg_c&=\sum_{l=1}^c\frac{1}{\prod_
{l'\neq l}(d_l-d_{l'})}\left(\frac{e_c^{N+1}\prod_{l'=1}^{c-1}d_{l'}-e_l^{N+1}\prod_{l'=1}^{c-1}d_{l'} }{d_c-d_l}\right)\\
 &=d_1\ldots d_{c-1}\sum_{l=1}^{c}\frac{1}{\prod_
{l'\neq l}(e_l-e_{l'})}\left(\frac{e_c^{N+1}-e_l^{N+1}}{e_c-e_l}\right).
\end{alignat*}
\end{proof}

\subsection{Homog\'en\'eit\'e en les variables}

Montrons la seconde partie du th\'eor\`eme \ref{princ}.

\begin{prop} \label{deuxprinc}
  $$\deg_{var}=d_1\ldots d_c\sum_{l=1}^{c}\frac{e_l^N}{\prod_{l'\neq l}(e_l-e_{l'})}.$$
\end{prop}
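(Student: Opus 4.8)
The plan is to follow exactly the same pattern as the proof of Proposition~\ref{premprinc}, but now using the relation $\deg_{var}=\beta_j(\Xi_A)$ from Corollaire~\ref{cardeg}~$(iii)$ together with the formula for $\Xi_A$ from Th\'eor\`eme~\ref{char}, which applies because $X_A$ is smooth by Proposition~\ref{lisse}. By symmetry in the variables $X_0,\ldots,X_N$ we may take $j=N$. Writing the sum over faces $\Gamma_{I,J}$ of $Q=Q(c,N,(d_i)_{1\leq i\leq c})$, the integral $\int_{\Gamma_{I,J}}\beta_N(u)\,d\mu_{\Gamma}(u)$ vanishes whenever $N\notin J$ since $\beta_N$ is then identically zero on that face. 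For the faces with $N\in J$, I expect to need an analogue of Proposition~\ref{inte}: namely a Fubini computation of $\int_{Q(c,N,(d_i))}\beta_N(u)\,d\mu(u)$, sliced this time along $\{\beta_N=s\}$ rather than $\{\alpha_c=s\}$. The key geometric observation should be that the slice $Q(c,N,(d_i))\cap\{\beta_N=s\}$ is, up to an affine map of known determinant, a copy of $Q(c,N-1,(d_i))$ but with a shift; more precisely one expects something like $\int_{Q(c,N,(d_i))}\beta_N\,d\mu$ to reduce to $\mu(Q(c,N-1,(d_i)))$-type integrals, and ultimately, via Proposition~\ref{vol}, to a sum $\sum_l \frac{1}{\prod_{l'\neq l}(d_l-d_{l'})}(\text{rational function of }d_l)$.

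Concretely, I would first establish the auxiliary integral formula, call it (say) $\int_{Q(c,N,(d_i))}\beta_N(u)\,d\mu(u)=\frac{1}{(c+N)(c+N+1)}\cdot(\text{something})$ or an expression of that shape, by the same integration-by-parts trick used in Proposition~\ref{inte}, invoking Lemme~\ref{polyn}~$(ii)$ to collapse the spurious lower-degree terms. Then I would substitute into
\[
\deg_{var}=\sum_{\substack{\varnothing\neq I\subset\{1,\ldots,c\}\\ N\in J\subset\{0,\ldots,N\}}}(-1)^{\codim(\Gamma_{I,J})}(\dim(\Gamma_{I,J})+1)\int_{\Gamma_{I,J}}\beta_N(u)\,d\mu_{\Gamma_{I,J}}(u),
\]
parametrize $J$ by $j=|J|$ (with a $\binom{N}{j-1}$ for the choice of the remaining elements of $J$ among $\{0,\ldots,N-1\}$), parametrize $I$ by its complement, and then repeatedly apply the binomial formula exactly as in Proposition~\ref{premprinc} to carry out the sum over $j$ first, then the sum over subsets. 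Each application of the binomial theorem turns a sum like $\sum_j \binom{N}{j}d_l^{\,j}$ into a clean power $(d_l+1)^{N}=d_l^{\,?}$... wait, into $e_l$-powers, since $d_l-1=e_l$; this is precisely why the answer comes out in terms of $e_l^N$.

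The main obstacle I anticipate is purely bookkeeping: getting the Fubini slicing for $\beta_N$ right (the slice is a polytope in one fewer variable but the determinant of the affine identification and the range of $s$ must be tracked carefully), and then managing the nested binomial summations without sign or index errors — the same delicate step that made Proposition~\ref{premprinc} long. A secondary subtlety is ensuring that all the intermediate ``polynomial of degree $\leq c-2$'' cancellations via Lemme~\ref{polyn}~$(ii)$ are legitimately applicable at each stage, i.e.\ that the numerators really do have the claimed low degree in $d_l$ before one drops them. Once the auxiliary integral is in hand and the binomial sums are pushed through, the final expression $d_1\cdots d_c\sum_{l=1}^{c} e_l^N/\prod_{l'\neq l}(d_l-d_{l'})$ should drop out; a quick consistency check is that this matches (\ref{degvar}), i.e.\ $(N+1)\deg_{var}=\sum_i d_i\deg_i$, using the formula for $\deg_i$ just proved — I would include that verification as a sanity check at the end if space permits, though strictly it follows automatically from the relations already derived.
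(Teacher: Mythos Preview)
Your approach is valid --- the paper even says so explicitly (``On pourrait proc\'eder par calcul direct comme en \ref{premprinc}'') --- but it is not the route the paper takes. The paper instead exploits the relation~(\ref{degvar}), namely $(N+1)\deg_{var}=\sum_i d_i\deg_i$, together with the formula for $\deg_i$ already established in Proposition~\ref{premprinc}; after substituting, a short algebraic identity (Lemme~\ref{petitcalcul}) collapses the double sum and yields the result in a few lines. In other words, what you propose as an optional closing sanity check is precisely the paper's entire proof. The trade-off is clear: your direct computation via $\beta_N(\Xi_A)$ and a $\beta_N$-analogue of Proposition~\ref{inte} is self-contained and parallels the $\deg_i$ computation, but requires establishing a new slicing integral and repeating the nested binomial bookkeeping; the paper's method avoids any new integral or inclusion--exclusion sum, at the cost of relying on Proposition~\ref{premprinc} and needing the auxiliary cancellation Lemme~\ref{petitcalcul}.
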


\begin{proof}[$\mathbf{Preuve}$]

On pourrait proc\'eder par calcul direct comme en \ref{premprinc}. On va plut\^ot profiter du calcul d\'ej\`a effectu\'e en \ref{premprinc} 
et de la relation (\ref{degvar}). Il vient :
\begin{alignat*}{2}
  \deg_{var}&=\frac{1}{N+1}\sum_{i=1}^c d_i\deg_i\\
   &=\frac{d_1\ldots d_c}{N+1}\sum_{i=1}^c\sum_{l=1}^c \frac{1}{\prod_
{l'\neq l}(e_l-e_{l'})}\left(\frac{e_i^{N+1}-e_l^{N+1}}{e_i-e_l}\right).
\end{alignat*}
Utilisant alors le calcul report\'e dans le lemme \ref{petitcalcul}, et vu la remarque \ref{remconv}, on obtient :
\begin{alignat*}{2}
  \deg_{var}&=\frac{d_1\ldots d_c}{N+1}\sum_{l=1}^c \frac{1}{\prod_
{l'\neq l}(e_l-e_{l'})}\left(\frac{e_l^{N+1}-e_l^{N+1}}{e_l-e_l}\right)\\
   &=d_1\ldots d_c\sum_{l=1}^c \frac{e_l^N}{\prod_
{l'\neq l}(e_l-e_{l'})}.
\end{alignat*}
\end{proof}

\begin{lemme}\label{petitcalcul}
$$\sum_{l=1}^c\sum\limits_{\substack{i=1\\i\neq l}}^c \frac{1}{\prod_
{l'\neq l}(e_l-e_{l'})}\left(\frac{e_i^{N+1}-e_l^{N+1}}{e_i-e_l}\right)=0.$$
\end{lemme}

\begin{proof}[$\mathbf{Preuve}$]

Fixons $l$ et introduisons $\Phi_l=\sum_{i\neq l}\frac{1}{\prod_
{i'\neq i}(e_i-e_{i'})}\prod_
{l'\neq i,l}(e_l-e_{l'})$, qu'on consid\`ere comme une fraction rationnelle en $e_l$ \`a coefficients dans $\mathbb{Q}((e_i)_{i\neq l})$. 
\'Ecrivons sa d\'ecomposition en \'el\'ements simples  $\Phi_l=\sum_{i\neq l} \frac{f_i}{e_l-e_i}$. En multipliant $\Phi_l$ par $(e_l-e_i)$, 
et en substituant $e_l=e_i$ dans l'expression obtenue, on calcule $f_i=-1$. On a montr\'e :
$$\sum_{i\neq l}\frac{1}{\prod_
{i'\neq i}(e_i-e_{i'})}\frac{1}{e_l-e_i}=-\frac{1}{\prod_
{l'\neq l}(e_l-e_{l'})}\sum_{i\neq l}\frac{1}{e_l-e_i}.$$

Multiplions cette identit\'e par $e_l^{N+1}$, sommons sur $l$, puis \'echangeons dans le terme de gauche le r\^ole des variables muettes $i$ et $l$ pour obtenir :

$$\sum_{l=1}^c\sum\limits_{\substack{i=1\\i\neq l}}^c \frac{1}{\prod_
{l'\neq l}(e_l-e_{l'})}\frac{e_i^{N+1}}{e_i-e_l}=-\sum_{l=1}^c\sum\limits_{\substack{i=1\\i\neq l}}^c \frac{1}{\prod_
{l'\neq l}(e_l-e_{l'})}\frac{e_l^{N+1}}{e_l-e_i}.$$

Faisons tout passer dans le terme de gauche : le lemme est d\'emontr\'e.
\end{proof}

\section{Caract\'eristique finie}

On explique dans cette partie comment modifier la preuve propos\'ee ci-dessus pour d\'emontrer le th\'eor\`eme \ref{princ} quand $K$ est 
de caract\'eristique finie. 
On en d\'eduit alors une preuve du th\'eor\`eme \ref{princ2}.

\subsection{\'{E}quation de la duale}

On conserve les notations de la partie \ref{part1}.

 Le th\'eor\`eme \cite{GKZ} Chap.9, 2.7, que nous  que nous avons \'enonc\'e en \ref{discri} d\'ecrivait l'\'equation de la vari\'et\'e duale
d'une vari\'et\'e torique lisse $X_A\subset\mathbb{P}((K^A)^*)$. 
Il ne vaut tel quel qu'en caract\'eristique $0$, et son \'enonc\'e doit \^etre modifi\'e en g\'en\'eral.
\`{A} cet effet, on introduit les notations suivantes.

Soit $W_{X_A}\subset\mathbb{P}((K^A)^*)\times\mathbb{P}(K^A)$ la vari\'et\'e d'incidence de $X_A$, c'est-\`a-dire l'adh\'erence de l'ensemble des couples
$(x,H)\in\mathbb{P}((K^A)^*)\times\mathbb{P}(K^A)$ tels que $H$ soit tangent en le point lisse $x$ de $X_A$. 
Notons $p_1$ et $p_2$ les projections de $W_{X_A}$ sur $X_A$ et $X_A^\vee=p_2(W_{X_A})$ respectivement. 
On munit $W_{X_A}$ et $X_A^{\vee}$ de leur structure r\'eduite.

Si $X_A$ n'est pas d\'efective, c'est-\`a-dire si $X_A^{\vee}$ est une hypersurface de $\mathbb{P}(K^A)$,
on note $\mu$ le degr\'e de l'application g\'en\'eriquement finie $p_2:W_{X_A}\to X_A^{\vee}$. Le th\'eor\`eme 
\ref{discri} admet alors la g\'en\'eralisation suivante :

\begin{thm}\label{discribis}
On suppose que $X_A$ est lisse, et $l\gg0$. 
\begin{enumerate}[(i)]
 \item 
Si $X_A$ n'est pas d\'efective, 
\begin{equation*}
 \Delta_A(f)^{\mu}=\det(C^\bullet(A,l),\partial_f,e)^{(-1)^k},
\end{equation*}
o\`u chacun des deux termes est bien d\'efini \`a une constante multiplicative non nulle pr\`es.
\item
Si $X_A$ est d\'efective, rappelons que $\Delta_A(f)=1$ par convention. Alors $$\Delta_A(f)=\det(C^\bullet(A,l),\partial_f,e)^{(-1)^k},$$ 
au sens o\`u le terme de droite est \'egalement une constante non nulle.
\end{enumerate}
\end{thm}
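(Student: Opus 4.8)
The strategy is to reduce Theorem~\ref{discribis} to the characteristic-zero statement Theorem~\ref{discri} by analyzing what goes wrong in positive characteristic, namely the fact that the conormal map $p_2\colon W_{X_A}\to X_A^{\vee}$ need no longer be birational. The starting point is to revisit the proof of Theorem~\ref{discri} in \cite{GKZ} Chap.~9: that proof produces, for $l\gg 0$, a complex $(C^\bullet(A,l),\partial_f)$ whose determinant (with respect to bases $e$) is, up to a nonzero constant, a power of the \emph{$E$-determinant}, i.e. the defining equation of the \emph{conormal variety's image viewed with multiplicity}. More precisely, the combinatorial construction of \cite{GKZ} computes, up to a scalar, the Chow form / resultant-type object attached to $X_A$ along the incidence variety $W_{X_A}$; in characteristic $0$ the genericity of projective duality (the reflexivity theorem) forces $p_2$ to be birational, so this object is exactly $\Delta_A^{\pm 1}$. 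In characteristic $p$ one only knows $p_2$ is generically finite of some degree $\mu$, and the same construction then computes $\Delta_A^{\mu}$ instead. So the plan is: (1) trace through \cite{GKZ}'s proof of Theorem~\ref{discri} and isolate the single point where reflexivity/birationality of $p_2$ is invoked; (2) replace it by the weaker generic-finiteness statement, valid in all characteristics (Wallace~\cite{Wallace}, see also the survey~\cite{Kleiman}); (3) conclude that $\det(C^\bullet(A,l),\partial_f,e)^{(-1)^k}$ equals $\Delta_A(f)^{\mu}$ up to a nonzero scalar, which is exactly part $(i)$.

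\textbf{The defective case.} For part $(ii)$, when $X_A$ is defective the dual variety $X_A^{\vee}$ has codimension $\geq 2$, so by convention $\Delta_A=1$, and the claim is that the complex $(C^\bullet(A,l),\partial_f)$ is still exact for $l\gg 0$ with determinant a nonzero constant independent of $f$. The exactness of the complex for $l\gg 0$ is a purely combinatorial/cohomological fact about the toric variety $X_A$ and does not depend on defectivity: it comes from the vanishing theorem of Serre applied to $\mathcal{O}(l)$ on $X_A$ together with the Koszul-type structure of $C^\bullet(A,l)$, exactly as in \cite{GKZ}. Once exactness is known, $\det(C^\bullet(A,l),\partial_f,e)$ is defined and is a rational function of $f$; the point is that it is in fact a polynomial (by the argument of \cite{GKZ}, the determinant is regular on the locus where the complex is exact, which here is \emph{all} of $K^A$ because $X_A^{\vee}$ has codimension $\geq 2$, so there is no hypersurface of non-exactness). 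A polynomial with no zeros — or, more carefully, a rational function regular and nowhere-vanishing on the complement of a codimension-$\geq 2$ set, hence regular and nowhere-vanishing on all of $K^A$ — must be a nonzero constant. Hence $\det(C^\bullet(A,l),\partial_f,e)^{(-1)^k}$ is a nonzero constant, equal to $1=\Delta_A$ up to scalar, which gives $(ii)$.

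\textbf{Main obstacle.} The delicate part is step (1)--(2): pinpointing exactly where \cite{GKZ}'s argument uses characteristic $0$ and checking that everything else survives. The combinatorial backbone — the identification $S_l=\tilde Q_l\cap\mathfrak{X}(\tilde T)$ for $l\gg0$ coming from Serre vanishing (equation~(\ref{restr})), the construction of the complex $C^\bullet(A,l)$, the base-change formula for determinants of complexes (Proposition~\ref{basechange}) — is characteristic-free. What is \emph{not} automatic is the geometric input relating the determinant of the complex to the actual equation $\Delta_A$ of the dual variety: \cite{GKZ} identifies the complex with a Cayley--Koszul complex whose determinant is the $X_A$-resultant, and then uses that in characteristic $0$ the $X_A$-resultant of a generic tangent hyperplane coincides with $\Delta_A$ because $p_2$ is birational. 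The clean way to handle this uniformly is to observe that the $X_A$-resultant (the determinant of the complex) always equals $\Delta_A^{\deg(p_2)}$ up to scalar: this is a degree computation on the incidence variety $W_{X_A}$, pushing its fundamental class forward along $p_2$, and it is insensitive to the ground field's characteristic. Granting this, $\deg(p_2)=\mu$ in the non-defective case and the first factor is $1$ (empty) in the defective case, so both parts follow at once. The residual care needed is purely bookkeeping: making sure the "$l\gg0$" thresholds and the "defined up to a nonzero constant" caveats are stated compatibly with the rest of the paper, and that the sign $(-1)^k$ is unaffected — which it is, since $k=\dim\tilde T$ is a combinatorial quantity.
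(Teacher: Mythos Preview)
Your proposal is correct and follows essentially the same approach as the paper: the paper's proof is a one-paragraph remark that the argument of \cite{GKZ} Chap.~9, 2.7 goes through verbatim except at the single place (in the proof of \cite{GKZ} Chap.~2, 2.5, page~59) where birationality of $p_2:W_{X_A}\to X_A^{\vee}$ is used, and that replacing birationality by generic finiteness of degree $\mu$ yields the stated modification. Your write-up is more detailed---in particular your separate treatment of the defective case via the codimension-$\geq 2$ argument is spelled out, whereas the paper subsumes it under ``adapting the arguments in the obvious way''---but the content is the same.
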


\begin{proof}[$\mathbf{Preuve}$]
 
La preuve de \cite{GKZ} Chap.9, 2.7 ne n\'ecessite qu'une modification mineure, que l'on va d\'ecrire. 
Cette preuve fait appel au th\'eor\`eme \cite{GKZ} Chap.2, 2.5. 
Au cours de la preuve de cet autre th\'eor\`eme, on utilise (page 59) le fait que si $X_A$ n'est pas d\'efective, $p_2:W_{X_A}\to X_A^{\vee}$ est birationnelle. 
En caract\'eristique $0$,
c'est une cons\'equence du th\'eor\`eme de r\'eflexivit\'e. Cependant, en caract\'eristique finie, $p_2:W_{X_A}\to X_A^{\vee}$ est seulement g\'en\'eriquement finie,
de degr\'e $\mu$. 

En prenant cette modification en compte, et en adaptant les arguments de mani\`ere \'evidente, on prouve le th\'eor\`eme.
\end{proof}

Tous les autres arguments que nous avons utilis\'es sont encore valables. Nous utiliserons librement les r\'esultats d\'ej\`a obtenus, notamment l'identification
de $D$ \`a la vari\'et\'e duale d'une vari\'et\'e torique explicite (proposition \ref{descridual}).

\subsection{Calcul du degr\'e $\mu$}
 Pour appliquer le th\'eor\`eme \ref{discribis}, il faut calculer la quantit\'e $\mu$ 
dans les cas o\`u $X_A$ n'est pas d\'efective : c'est le but de la proposition \ref{mu}.
On conserve les notations du paragraphe \ref{intertor}.

\begin{prop}\label{mu}
Supposons qu'on n'ait pas $d_1=\ldots=d_c=1$ et $c<N+1$.

Alors $X_A$ n'est pas d\'efective.
Si $K$ n'est pas de caract\'eristique $2$ ou si $n$ est impair, $\mu=1$. Sinon, $\mu=2$.
\end{prop}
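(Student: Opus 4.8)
The plan is to reduce the computation of $\mu$ to the theory of projective duality in positive characteristic, following Kleiman's survey \cite{Kleiman}. The key dichotomy there is \emph{reflexivity}: a variety $X$ is reflexive if and only if the incidence map $p_1:W_X\to X$ is separable over the generic point, and for a reflexive variety $p_2:W_X\to X^\vee$ is birational, i.e.\ $\mu=1$. So the first step is to decide when $X_A$ is reflexive, and when it is not, to pin down $\mu$ exactly.

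First I would recall (Monge--Segre--Wallace criterion, see \cite{Kleiman}) that $\mu\neq 1$ forces the contact locus of a general tangent hyperplane to behave non-generically, and that in that case the characteristic of $K$ must be $p>0$ and $p\mid\mu$ in a strong sense; moreover the map $p_2$ factors through a power of Frobenius on a suitable ruling. The concrete geometric input is the description of $W_{X_A}$ given in the proof of Proposition \ref{descridual}: a general point of $\tilde X_A^\vee$ corresponds to a hyperplane $\{f=\sum_i Y_iF_i=0\}$ where $(F_1,\dots,F_c)$ is a general point of $D$, and by Lemma \ref{irred}$(i)$, $(ii)$ the complete intersection $\{F_1=\dots=F_c=0\}$ has a single ordinary singular point $P$ with all coordinates nonzero, at which the partial derivatives of the $F_i$ satisfy one linear relation with nonzero coefficients $(y_1,\dots,y_c)$. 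The fibre $p_2^{-1}(f)$ consists of the contact points, i.e.\ of the points of $X_A$ where $f$ is tangent; one shows this fibre is a single reduced point (so $\mu=1$) unless the Hessian-type quadratic form governing the second-order behaviour of $f$ at $P$ is degenerate in a way that only happens in characteristic $2$. Concretely, the contact condition is that $P$ together with the direction data is a critical point of $f$ restricted to $X_A$; writing this out in the torus coordinates of $\tilde X_A$, the multiplicity of $p_2$ at a general point equals the number of solutions (counted without multiplicity, i.e.\ as a set) of the system obtained from the second differential of $f$, which is controlled by the rank of the Hessian of the restriction of a generic member of the linear system to the smooth locus — this is where the parity of $n=N-c$ enters, a nondegenerate quadratic form in $n+1$ variables over a field of characteristic $2$ being ``defective'' precisely when $n+1$ is odd, that is $n$ even.

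Concretely I would argue as follows. (1) Dispose of the defective case: by Remark \ref{remdef} (via Corollary \ref{cridef} and the forthcoming Lemmas \ref{nondef}, \ref{def}), $X_A$ is defective exactly when $d_1=\dots=d_c=1$ and $c<N+1$, which is excluded by hypothesis; hence $X_A$ is not defective and $\mu$ is defined. (2) Identify $\mu$ with the inseparable degree of $p_1:W_{X_A}\to X_A$, using that $p_2$ is birational onto $X_A^\vee$ whenever $X_A$ is reflexive and that reflexivity is equivalent to $p_1$ being generically smooth (\cite{Kleiman}); more precisely use the characteristic-$p$ reflexivity theorem that $\mu$ equals the degree of inseparability of the Gauss-type map. (3) Compute that degree of inseparability locally at the ordinary node $P$: in characteristic $\neq 2$ an ordinary quadratic singularity is ``ordinary'' and $p_1$ is étale there, giving $\mu=1$; in characteristic $2$ the same holds when the ambient quadratic form has odd rank, i.e.\ $n$ odd, while for $n$ even the quadratic form in $n+1$ variables has a one-dimensional radical, the contact point ``doubles'', and one gets $\mu=2$ — and not more, because the radical is exactly one-dimensional. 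For this last point one reduces, after the toric/jacobian bookkeeping of \S\ref{intertor}, to the classical statement that the discriminant of a general quadratic form in $m$ variables over a field of characteristic $2$ is a square iff $m$ is odd, which is precisely the $N=1$, $c=1$ example discussed in the introduction and can be leveraged by restricting to a general pencil of lines through $P$.

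The main obstacle I expect is step (3): carrying out the local analysis at the node carefully enough to prove that $\mu=2$ \emph{exactly} (not merely $\mu\in\{2,4,\dots\}$) in the bad case, and that no further degeneration occurs for special values of the $d_i$ — this requires knowing that for a \emph{general} point of $D$ the singular point is an honest ordinary double point whose tangent cone is a smooth quadric, so that the relevant quadratic form on the $n+1$-dimensional tangent space is as nondegenerate as characteristic $2$ allows, i.e.\ has exactly a one-dimensional radical. Everything else (the reduction to reflexivity, the identification of $\mu$ with an inseparable degree, the non-defectivity) is either quoted from \cite{Kleiman} or already established in the excerpt.
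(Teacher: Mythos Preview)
Your overall strategy---reduce to a quadratic form at an ordinary double point and read off $\mu$ from its behaviour in characteristic $2$---is the paper's strategy too, and your parity bookkeeping ($n+1$ versus the paper's $k=c+N-1$) lands on the same answer since these have the same parity. But two points in your plan are genuinely off.

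First, step (2) is misstated. The map $p_1:W_{X_A}\to X_A$ is a projective bundle (the fibre over a smooth point $x$ is the linear space of hyperplanes tangent at $x$), hence always smooth; speaking of its inseparable degree makes no sense, and reflexivity is not equivalent to $p_1$ being generically smooth. What you actually need is simpler and more direct: once you know (Lemma \ref{nondef}) that a general $H\in X_A^\vee$ has $p_2^{-1}(H)$ supported at a single point, $\mu$ is just the length of that scheme-theoretic fibre. The paper uses Kleiman's identification $p_2^{-1}(H)=\Sing(X_A\cap H)$, the singular locus with its Fitting-ideal structure, and then computes that length from the SGA7 normal form of an ordinary double point: the ideal is $\langle x_1,\ldots,x_k\rangle$ except when $\car K=2$ and $k$ is odd, where it is $\langle x_1,\ldots,x_{k-1},x_k^2\rangle$ of colength $2$.

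Second, and this is the real gap: you analyse the node of the complete intersection $Z=\{F_1=\ldots=F_c=0\}\subset\mathbb{P}^N$, but what governs $\mu$ is the singular point of $X_A\cap H$ inside $X_A$, which is a different variety of different dimension. One has to prove that $X_A\cap H$ itself has a unique ordinary double point. The paper does this in Lemma \ref{ptdbl} by an explicit Hessian computation in the $(c+N+1)$ torus coordinates of (\ref{apptor}), showing that if the induced quadratic form on $T_tT$ were not ordinary then the quadratic form $Q$ defining the tangent cone of $Z$ at $P$ would fail to be ordinary as well, contradicting Lemma \ref{icgood}. This transfer is the technical heart of the proof and is not addressed in your plan; without it, the claim that ``the relevant quadratic form on the $(n+1)$-dimensional tangent space is as nondegenerate as characteristic $2$ allows'' remains an assertion about the wrong variety.
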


On commence par montrer plusieurs lemmes.
Les arguments qui suivent sont l\'eg\`erement alourdis par le fait qu'il faut 
manipuler avec pr\'ecaution les points doubles ordinaires en caract\'eristique $2$.

\begin{lemme}\label{icgood}
 
\begin{enumerate}[(i)]
 \item 
Supposons que $c=N+1$. Alors pour $(F_1,\ldots, F_c)\in D$ g\'en\'eral, $\{F_1=\ldots=F_c=0\}$ est un unique point r\'eduit.

\item
Supposons que $c<N+1$ et qu'il existe $i$ tel que $d_i\geq2$. Alors pour $(F_1,\ldots, F_c)\in D$ g\'en\'eral, $\{F_1=\ldots=F_c=0\}$ est de codimension $c$ et a
un unique point singulier, 
qui est un point double ordinaire.

\item 
Supposons qu'on n'ait pas $d_1=\ldots=d_c=1$ et $c<N+1$. Alors pour $(F_1,\ldots, F_c)\in D$ g\'en\'eral, les 
diff\'erentielles des $F_i$ sont li\'ees en un unique point de $\{F_1=\ldots=F_c=0\}$, et ce par une unique relation.

\end{enumerate}
\end{lemme}

\begin{proof}[$\mathbf{Preuve}$]
\begin{enumerate}[(i)]
 \item
Cette propri\'et\'e est ouverte dans $D$ qui est irr\'eductible par le lemme \ref{irred} : il suffit donc d'exhiber un jeu d'\'equations v\'erifiant cette propri\'et\'e. 
Par Bertini, on choisit $F_1,\ldots, F_{c-1}$ g\'en\'eraux de sorte que le sch\'ema $\{F_1=\ldots=F_{c-1}=0\}$ soit r\'eunion de points r\'eduits. On choisit alors $F_c$ 
de sorte \`a ce qu'il passe par un de ces points et \'evite les autres.

\item
Par description de la d\'eformation verselle d'un point double ordinaire (voir \cite{SGA7} Exp. XV Prop. 1.3.1), cette propri\'et\'e est ouverte dans $D$.
Comme $D$ est irr\'eductible par le lemme \ref{irred}, 
il suffit donc d'exhiber un jeu d'\'equations v\'erifiant cette propri\'et\'e. 
On consid\`ere le syst\`eme lin\'eaire constitu\'e des $F_i$ passant par $P=[0:\ldots:0:1]$, y \'etant singuliers et dont les termes d'ordre $2$ sont un multiple
d'une forme quadratique ordinaire fix\'ee. Le th\'eor\`eme de Bertini assure que le membre 
g\'en\'eral de ce syst\`eme lin\'eaire a pour unique point singulier $P$ ; c'est
un point double ordinaire.
On prend alors $F_1,\ldots,\hat{F_i},\ldots, F_c$ g\'en\'erales
passant par $P$. Le th\'eor\`eme de Bertini assure que $(F_1,\ldots, F_c)$ convient.

\item Les deux premiers points permettent de d\'ecrire, pour $(F_1,\ldots, F_c)\in D$ g\'en\'eral, les dimensions des espaces tangents de $\{F_1=\ldots=F_c=0\}$.
On en d\'eduit le r\'esultat.

\end{enumerate}

\end{proof}

\begin{lemme}\label{nondef}
Supposons qu'on n'ait pas $d_1=\ldots=d_c=1$ et $c<N+1$. Alors :
\begin{enumerate}[(i)]
 \item
La vari\'et\'e  $X_A$ n'est pas d\'efective.
 \item
De plus, si $H$ est un \'el\'ement g\'en\'eral de $D=\tilde{X}_A^\vee$ vu comme un hyperplan de $\mathbb{P}((K^A)^*)$, 
$H$ est tangent \`a $X_A$ en un unique point.

\end{enumerate}
\end{lemme}

\begin{proof}[$\mathbf{Preuve}$]
\begin{enumerate}[(i)]
 \item
Choisissons $H\in D$ g\'en\'eral comme dans les lemmes \ref{icgood} $(iii)$ et \ref{irred} $(i)$ et $(ii)$. L'\'equivalence entre (\ref{dual}) et (\ref{singu}) dans la 
preuve de \ref{descridual} montre alors qu'il existe un unique point $t$ de $T$ en lequel $H$ est tangent \`a $X_A$. En particulier, $t$ est isol\'e
dans $p_2^{-1}(H)$. Comme $W_{X_A}$ est irr\'eductible, cela implique que $p_2$ est g\'en\'eriquement finie. Ainsi, $X_A$ n'est pas d\'efective.

 \item
Soit $H\in D$ g\'en\'eral comme au point pr\'ec\'edent. Choisissons-le de plus hors de $X_A^\vee\setminus p_2(p_1^{-1}(X_A\setminus T))$, qui est
un ferm\'e strict car $p_2$ est g\'en\'eriquement finie. L'hyperplan $H$ n'est tangent \`a $X_A$ qu'en des points de $T$, et est tangent \`a $T$ en un unique point. Cela 
conclut.

\end{enumerate}

\end{proof}

\begin{lemme}\label{ptdbl}
Supposons qu'on n'ait pas $d_1=\ldots=d_c=1$ et $c<N+1$.
Alors si $H$ est un \'el\'ement g\'en\'eral de $D=\tilde{X}_A^\vee$ vu comme un hyperplan de $\mathbb{P}((K^A)^*)$, 
$X_A\cap H$ a un unique point singulier qui est un point double ordinaire.
\end{lemme}

\begin{proof}[$\mathbf{Preuve}$]
On choisit $H=(F_1,\ldots,F_c)$ g\'en\'eral comme dans le lemme pr\'ec\'edent, et comme dans le lemme \ref{icgood} $(i)$ (resp. $(ii)$).
On notera $Z=\{F_1=\ldots=F_c=0\}\subset\mathbb{P}^N$ et $\tilde{Z}\subset\mathbb{A}^{N+1}$ le c\^one affine sur $Z$.

Les choix faits montrent que $H$ est tangent \`a $X_A$ en un unique point $t\in T$. La preuve de la proposition \ref{descridual} montre que 
si $t'=(y_1,\ldots,y_c,x_0,\ldots,x_N)\in(K^*)^{c+N+1}$ est un ant\'ec\'edent de $t$ par l'application
(\ref{apptor}), $x=[x_0,\ldots,x_N]$ est l'unique point de $Z$ (resp. l'unique point singulier de $Z$), 
que c'est un point r\'eduit (resp. un point double ordinaire), et que l'unique relation entre les diff\'erentielles
des $F_i$ en $\tilde{x}=(x_0,\ldots,x_N)$ est donn\'ee par $\sum_i y_i\frac{\partial F_i}{\partial X_j}(\tilde{x})=0$, $0\leq j\leq N$.

Soit $q$ la forme quadratique induite par $H$ sur $T_tT$. On raisonne par l'absurde en la supposant non ordinaire : 
il existe $w\in T_tT$ non nul tel que $w\in\rad(q)$ et $q(w)=0$.
On note $q'$ la forme quadratique que $q$ induit sur $T_{t'}(K^*)^{c+N+1}$ via l'application (\ref{apptor}). Le noyau de la surjection
$T_{t'}(K^*)^{c+N+1}\to T_tT$ est engendr\'e
par $w'_1=(y_1,\ldots,y_c,0,\ldots,0)$ et $w'_2=(-d_1y_1,\ldots,-d_cy_c,x_0,\ldots,x_N)$. 
Notons $w'=(u_1,\ldots,u_c,v_0,\ldots,v_N)$ un ant\'ec\'edent de $w$, de sorte que $w'\in\rad(q')$, $q'(w')=0$ et 
$w'\notin\langle w'_1,w'_2\rangle$. 

La condition $w'\in\rad(q')$ signifie que $w'$ appartient au noyau de la matrice de la forme bilin\'eaire associ\'ee
\`a $q'$, c'est-\`a-dire au noyau de la Hessienne de $H$ en $t'$. C'est un syst\`eme d'\'equations qui s'\'ecrit :
\begin{alignat}{2}
 \sum_j v_j\frac{\partial F_i}{\partial X_j}(\tilde{x})&=0,&& 1\leq i\leq c.             \label{Hess1} \\
\sum_jv_j\frac{\partial^2\sum_i y_iF_i}{\partial X_j\partial X_k}(\tilde{x})
&=\sum_i u_i\frac{\partial  F_i}{\partial X_k}(\tilde{x}), \text{ } &&0\leq k\leq N.\label{Hess2}      
\end{alignat}

Consid\'erons $\tilde{v}=(v_0,\ldots,v_N)$ comme un vecteur
tangent \`a $\mathbb{A}^{N+1}$ en $\tilde{x}$. L'\'equation (\ref{Hess1}) montre que le vecteur $\tilde{v}$ appartient \`a $T_{\tilde{x}}\tilde{Z}$.
Montrons qu'il est non radial. Si c'\'etait le cas, on pourrait, quitte \`a retrancher \`a $w'$ un multiple de $w'_2$, le
supposer nul. L'\'equation (\ref{Hess2}) fournit alors la relation
 $\sum_i u_i\frac{\partial F_i}{\partial X_k}(\tilde{x})=0$, $0\leq k\leq N$ entre les diff\'erentielles des $F_i$ en $\tilde{x}$. Ce n'est possible par
hypoth\`ese que si $w'$ est proportionnel \`a $w'_1$, ce qui contredit $w'\notin\langle w'_1,w'_2\rangle$. Ainsi $\tilde{v}$ n'est pas radial.

On distingue alors deux cas.
\begin{enumerate}[(i)]
 \item 
Supposons que $c=N+1$. Un vecteur $v\in T_xZ$ se relevant en $\tilde{v}$ est alors un \'el\'ement non nul de $T_x Z$, et
$Z$ n'est donc pas un point r\'eduit. C'est la contradiction recherch\'ee.
\item
Supposons que $c<N+1$ et qu'il existe $i$ tel que $d_i\geq2$.

Vu l'unique relation liant les diff\'erentielles en $\tilde{x}$ des $F_i$, la forme quadratique qui est l'\'equation dans $T_{\tilde{x}}\tilde{Z}$
du c\^one tangent \`a $\tilde{Z}$ en $\tilde{x}$ est la restriction \`a $T_{\tilde{x}}\tilde{Z}$ de la forme quadratique induite par
les termes d'ordre deux de $\sum y_iF_i$. On note $\tilde{Q}$ cette forme quadratique.

Montrons que $\tilde{v}\in\rad(\tilde{Q})$. L'\'equation (\ref{Hess2}) signifie que si un vecteur est orthogonal \`a 
$(\frac{\partial}{\partial X_k}(\sum_i u_i F_i)(\tilde{x}))_{0\leq k\leq N}$ pour le produit scalaire usuel, il est automatiquement orthogonal \`a $\tilde{v}$
pour la forme bilin\'eaire associ\'ee \`a $\tilde{Q}$. Or tous les vecteurs de $T_{\tilde{x}}\tilde{Z}$ sont orthogonaux \`a
$(\frac{\partial}{\partial X_k}(\sum_i u_i F_i)(\tilde{x}))_{0\leq k\leq N}$, vu comme un gradient.
On a bien montr\'e $\tilde{v}\in\rad(\tilde{Q})$.

On v\'erifie ensuite que l'\'equation $q'(w')=0$ est la m\^eme \'equation que $\tilde{Q}(\tilde{v})=0$. Soit alors $v\in T_xZ$ se relevant en $\tilde{v}$, et
$Q$ la forme quadratique sur $T_xZ$, \'equation du c\^one tangent \`a $Z$ en $x$, induisant $\tilde{Q}$ sur $T_{\tilde{x}}\tilde{Z}$. On
a montr\'e que $v$ est un \'el\'ement non nul de $\rad(Q)$ sur lequel $Q$ s'annule. La forme quadratique $Q$ n'est donc pas ordinaire et 
$x$ ne peut \^etre un point double ordinaire de $Z$. 
C'est absurde.
\end{enumerate}
\end{proof}

On peut alors prouver la proposition \ref{mu} :
\begin{proof}[$\mathbf{Preuve \text{ }de \text{ }la\text{ } proposition\text{ }\ref{mu}}$]
La vari\'et\'e $X_A$ n'est pas d\'efective par \ref{nondef} $(i)$, de sorte que $\mu$ est bien d\'efini.

Soit $H$ un point g\'en\'eral de $X_A^\vee$. Soit $p_2^{-1}(H)\subset X_A$ le lieu sch\'ematique le long duquel $H$ est tangent \`a $X_A$ ; 
par d\'efinition de $\mu$, on a $\mu=\lon(p_2^{-1}(H))$. 
Par \cite{Kleiman} I (8) et (9), $p_2^{-1}(H)=\Sing(X_A\cap H)$ o\`u le lieu singulier de $X_A\cap H$ est muni 
de la structure sch\'ematique donn\'ee par le $(k-1)$-i\`eme
id\'eal de Fitting du faisceau des diff\'erentielles de K\"ahler. 

Comme $H$ est choisi g\'en\'eral, par le lemme \ref{ptdbl}, $X_A\cap H$ a un unique point singulier qui est un point double ordinaire. 
Pour calculer $\Sing(X_A\cap H)$, on peut travailler dans le compl\'et\'e de $X_A\cap H$ en 
ce point. Par \cite{SGA7} Exp. XV Th. 1.2.6, celui-ci est isomorphe au le lieu des z\'eros dans $K[[x_1,\ldots,x_{k}]]$ de la forme 
quadratique ordinaire canonique $x_1x_2+\ldots+x_{k-1}x_{k}$
si $k$ est pair ou $x_1x_2+\ldots+x_{k-2}x_{k-1}+x_{k}^2$ si $k$ est impair. 

Sur ces \'equations, il est facile de calculer le $(k-1)$-i\`eme
id\'eal de Fitting du faisceau des diff\'erentielles : c'est $\langle x_1,\ldots, x_{k}\rangle$ sauf si $k$ est impair et 
$K$ est de caract\'eristique $2$ auquel cas c'est $\langle x_1,\ldots,x_{k-1},x_{k}^2\rangle$. Dans le premier cas, le sous-sch\'ema qu'il 
d\'efinit est un point r\'eduit et $\mu=\lon(p_2^{-1}(H))=1$. Dans le second cas, il d\'efinit un sous-sch\'ema de longueur $2$ et $\mu=\lon(p_2^{-1}(H))=2$.
Comme $n=N-c$ est de parit\'e oppos\'ee \`a $k=c+N-1$, la proposition est
d\'emontr\'ee.
\end{proof}

\subsection{Preuve du th\'eor\`eme principal}

On commence par montrer que dans les cas non trait\'es par le lemme \ref{nondef}, $X_A$ est d\'efective.

\begin{lemme}\label{def}
Supposons $d_1=\ldots=d_c=1$ et $c<N+1$. Alors $X_A$ est d\'efective. 
\end{lemme}

\begin{proof}[$\mathbf{Preuve}$]
Les $F_i$ sont des formes lin\'eaires. La sous-vari\'et\'e $D$ de $V$ correspond au lieu o\`u elles ne sont pas ind\'ependantes, et est donc d\'ecrit par l'annulation 
d'un certain nombre de mineurs. On en d\'eduit ais\'ement que ce lieu est de codimension $\geq 2$ dans $V$.
Par le corollaire \ref{cridef}, $X_A$ est alors d\'efective.
\end{proof}

On obtient alors une preuve du th\'eor\`eme \ref{princ}.

\begin{proof}[$\mathbf{Preuve \text{ }du \text{ }th\acute{e}or\grave{e}me\text{ }\ref{princ}}$]
On distingue deux cas.
\begin{enumerate}[(i)]
 \item 
Supposons qu'on n'ait pas $d_1=\ldots=d_c=1$ et $c<N+1$. Alors, par le lemme \ref{nondef}, $X_A$ n'est pas d\'efective, et 
la preuve du th\'eor\`eme \ref{princ} en caract\'eristique nulle fonctionne encore. Il faut seulement remplacer le 
th\'eor\`eme \ref{discri} par le th\'eor\`eme \ref{discribis} $(i)$, et \'evaluer $\mu$ \`a l'aide de la proposition \ref{mu}.
\item
Si $d_1=\ldots=d_c=1$ et $c<N+1$, $X_A$ est d\'efective par le lemme \ref{def}. Par le corollaire \ref{cridef}, 
$D$ est de codimension $\geq 2$ dans $V$, de sorte que $\Delta=1$, et que tous les degr\'es qu'on cherche \`a
calculer sont nuls. D'autre part, la preuve fournie en caract\'eristique nulle fonctionne sans modifications en caract\'eristique quelconque, 
en faisant intervenir le th\'eor\`eme \ref{discribis} $(ii)$ \`a la place du th\'eor\`eme \ref{discri}. Les
termes de droite dans l'\'enonc\'e du th\'eor\`eme \ref{princ} sont donc \'egalement nuls. Le facteur $\frac{1}{\mu}$ ne joue alors aucun r\^ole, et le th\'eor\`eme
est d\'emontr\'e.\end{enumerate}
\end{proof}

\subsection{R\'eduction modulo $p$ du discriminant}
On montre dans ce paragraphe le th\'eor\`eme \ref{princ2}.

\begin{proof}[$\mathbf{Preuve \text{ }du \text{ }th\acute{e}or\grave{e}me\text{ }\ref{princ2}}$]
La situation d\'ecrite dans le paragraphe \ref{situation} se met en famille sur $\Spec(\mathbb{Z})$ : on dispose d'un fibr\'e vectoriel g\'eom\'etrique 
$V_{\mathbb{Z}}$ sur $\Spec(\mathbb{Z})$, et d'un sous-sch\'ema ferm\'e r\'eduit $D_{\mathbb{Z}}$ de celui-ci. On note $\Delta_{\mathbb{Q}}$ et $\Delta_{\mathbb{F}_p}$
les polyn\^omes discriminant sur $\mathbb{Q}$ et $\mathbb{F}_p$.

Si $d_1=\ldots=d_c=1$ et $c<N+1$, $\Delta_{\mathbb{Q}}=1$ par le lemme \ref{def} et le corollaire \ref{cridef}, et le th\'eor\`eme est \'evident. 
Dans le cas contraire, toutes les fibres sont des hypersurfaces 
par le lemme \ref{nondef} et le corollaire \ref{cridef}. Elles co\"incident ensemblistement avec le lieu discriminant, et sont donc
irr\'eductibles par le lemme \ref{irred}. Par cons\'equent, la r\'eduction modulo $p$ de $\Delta_{\mathbb{Q}}$ s'annule pr\'ecis\'ement sur le lieu discriminant,
et est donc une puissance de $\Delta_{\mathbb{F}_p}$. Comparant les degr\'es \`a l'aide du th\'eor\`eme \ref{princ}, 
on voit que cette puissance est $1$ sauf si $p=2$ et $n$ est pair, auquel cas cette puissance vaut $2$. Comme $\Delta_{\mathbb{F}_p}$ est irr\'eductible par 
d\'efinition, cela conclut.
\end{proof}


\addcontentsline{toc}{section}{R\'{e}f\'{e}rences}

\bibliographystyle{plain}
\bibliography{biblio}

\end{document}